\newtheorem{theorem}{Theorem}[section]
\newtheorem{remark}{Remark}[section]
\newtheorem{corollary}[theorem]{Corollary}
\newtheorem{lemma}[theorem]{Lemma}
\newtheorem{conjecture}[theorem]{Conjecture}
\newtheorem*{definition*}{Definition}
\def\RR{\mathcal{R}}
\def\S{\mathcal{S}}
\newcommand{\I}{\mathcal{I}}
\begin{document}
\title{Distribution of distances in positive characteristic}

\author{ Thang Pham \thanks{Department of Mathematics, ETH Switzerland. Email: phamanhthang.vnu@gmail.com} \and Vinh Le \thanks{Vietnam Institute of Educational Sciences. Email: vinhle@vnies.edu.vn}}
\date{}
\maketitle  
\begin{abstract}
Let $\mathbb{F}_q$ be an arbitrary finite field, and $\mathcal{E}$ be a point set in $\mathbb{F}_q^d$. Let $\Delta(\mathcal{E})$ be the set of distances determined by pairs of points in $\mathcal{E}$. By using Kloosterman sums, Iosevich and Rudnev (2007) proved that if $|\mathcal{E}|\ge 4q^{\frac{d+1}{2}}$ then $\Delta(\mathcal{E})=\mathbb{F}_q$. In general, this result is sharp in odd dimensional spaces over arbitrary finite fields. In this paper, we use the point-plane incidence bound due to Rudnev to prove that if $\mathcal{E}$ has Cartesian product structure in vector spaces over prime fields, then we can break the exponent $(d+1)/2$ and still cover all distances. We also show that the number of pairs of points in $\mathcal{E}$ of any given distance is close to its expected value. 
\end{abstract}
\section{Introduction}
Let $\mathcal{E}$ be a finite subset of $\mathbb{R}^d$ ($d \ge 2$), and $\Delta(\mathcal{E})$ be the distance set determined by $\mathcal{E}$. The Erd\H{o}s distinct distances problem is to find the best lower bound of the size of the distance set $\Delta(\mathcal{E})$ in terms of the size of the point set $\mathcal{E}$.

In the plane case, Erd\H{o}s \cite{er} conjectured that $|\Delta(\mathcal{E})| \gg |\mathcal{E}|/\sqrt{\log |\mathcal{E}|}$. This conjecture was proved up to logarithmic factor by Guth and Katz \cite{guth} in 2010. More precisely, they showed that $|\Delta(\mathcal{E})| \gg |\mathcal{E}|/\log |\mathcal{E}|$. In higher dimension cases, Erd\H{o}s \cite{er} also conjectured that  $|\Delta(\mathcal{E})| \gg |\mathcal{E}|^{2/d}$. Interested readers are referred to \cite{SV} for results on Erd\H{o}s distinct distances problem in three and higher dimensions. 

In this paper, we use the following notations: $X \ll Y$ means that there exists  some absolute constant $C_1>0$ such that $X \leq C_1Y$,  $X\sim Y$ means $Y\ll X\ll Y$, $X \gtrsim Y$ means $X\gg (\log Y)^{-C_2} Y$ for some absolute constant $C_2>0$, and $X \gtrsim_d Y$ mean $X \ge C_3(\log Y)^{-C_4} Y$ for some positive constants $C_3, C_4$ depending on $d$.

As a continuous analog of the Erd\H{o}s distinct distances problem,  Falconer \cite{fal} asked how large the Hausdorff dimension of $\mathcal{E} \subset \mathbb{R}^d$ needs to be to ensure that the Lebesgue measure of $\Delta(\mathcal{E})$ is positive. 
He conjectured that for any subset $\mathcal{E} \subset \mathbb{R}^d$ of the Hausdorff dimension greater than $d/2$ then $\mathcal{E}$ determines a distance set of a positive Lebesgue measure. This conjecture is still open in all dimensions. We refer readers to \cite{g-h, alex-fal} for recent updates on this conjecture. 

Let $\mathbb{F}_q$ be the finite field of order $q$, where $q$ is an odd prime power. Given two points $\mathbf{x} = (x_1, \ldots, x_d)$ and $\mathbf{y} = (y_1, \ldots, y_d)$ in $\mathbb{F}_q^d$, we denote the distance between $\mathbf{x}$ and $\mathbf{y}$ by
\[ || \mathbf{x} - \mathbf{y} || := (x_1 - y_1)^2 + \ldots + (x_d - y_d)^2.\]
Note that the distance function defined here is not a metric but it is invariant under translations and actions of the orthogonal group.

For a subset $\mathcal{E} \subset \mathbb{F}_q^d$, we denote the set of all distances determined by $\mathcal{E}$ by 
\[ \Delta(\mathcal{E}): = \{|| \mathbf{x} - \mathbf{y}|| : \mathbf{x}, \mathbf{y} \in \mathcal{E} \}.\]

The finite field analogue of the Erd\H{o}s distinct distances problem was first studied by Bourgain, Katz, and Tao in 2003 \cite{bkt}. More precisely, they proved that in the prime field $\mathbb{F}_p$ with $p \equiv 3$ mod 4, for any subset $\mathcal{E} \subset \mathbb{F}_p^2$ of the cardinality $|\mathcal{E}| = p^{\alpha}$, $0 < \alpha < 2$, then $|\Delta(\mathcal{E})| \gg |\mathcal{E}|^{\frac{1}{2} + \epsilon}$ for some $\epsilon = \epsilon(\alpha) > 0$. 

Note that the condition $p \equiv 3 \mod 4$ in Bourgain, Katz, and Tao's result is necessary, since if $p \equiv 1$ mod 4, then there exists $i \in \mathbb{F}_p$ such that $i^2 = -1$. By taking $\mathcal{E} = \{(x, i x): x \in \mathbb{F}_p\}$, we have $|\mathcal{E}| = p$ and $\Delta(\mathcal{E}) = \{0\}$. 

In the setting of arbitrary finite fields $\mathbb{F}_q$, Iosevich and Rudnev \cite{ir} showed that Bourgain, Katz, and Tao's result does not hold. For example, assume that $q = p^2$, one can take $\mathcal{E} = \mathbb{F}_p^2$ then $\Delta(\mathcal{E}) = \mathbb{F}_p$ or $|\Delta(\mathcal{E})| = |\mathcal{E}|^{1/2}$. Thus, Iosevich and Rudnev reformulated the problem in the spirit of the Falconer distance conjecture over the Euclidean spaces. More precisely, they asked for a subset $\mathcal{E} \subset \mathbb{F}_q^d$, how large does $|\mathcal{E}|$ need to be to ensure that $\Delta(\mathcal{E})$ covers the whole field or at least a positive proportion of all elements of the field? 
 
Using Fourier analytic methods, Iosevich and Rudnev \cite{ir} proved that for any point set $\mathcal{E} \subset \mathbb{F}_q^d$ with the cardinality $|\mathcal{E}| \ge 4q^{(d+1)/2}$ then $\Delta(\mathcal{E})=\mathbb{F}_q$. Hart, Iosevich, Koh, and Rudnev \cite{hart} showed that, in general, the exponent $(d+1)/2$ cannot be improved when $d$ is odd, even if we only want to cover a positive proportion of all the distances. In even dimensional cases, it has been conjectured that the exponent $(d+1)/2$ can be improved to $d/2$, which is in line with the Falconer distance conjecture in the Euclidean space. 

In the plane case, Bennett, Hart, Iosevich, Pakianathan, and Rudnev \cite{BHIRP} proved that if $\mathcal{E} \subset \mathbb{F}_q^2$ of cardinality $|\mathcal{E}| \ge q^{4/3}$, then $\Delta(\mathcal{E})$ covers a positive proportion of all distances. Murphy and Petridis \cite{mur} showed that there are infinite subsets of $\mathbb{F}_q^2$ of size $q^{4/3}$ whose distance sets do not cover the whole field $\mathbb{F}_q$. It is not known whether there exist a small $c>0$ and a set $\mathcal{E}\subset \mathbb{F}_q^2$ with $|\mathcal{E}|\ge cq^{3/2}$ such that $\Delta(\mathcal{E})\ne \mathbb{F}_q$. We refer the interested reader to \cite[Theorem 2.7]{hart} for a construction in odd dimensional spaces.

Chapman et al. \cite{CEHIK10} broke the exponent $\frac{d+1}{2}$ to $\frac{d^2}{2d-1}$ under the additional assumption that the set $\mathcal{E}$ has Cartesian product structure. However, in this case, they can cover only a positive proportion of all distances. In the setting of prime fields, it has been proved in \cite{P} that for $A\subset \mathbb{F}_p$, we have $|\Delta(A^d)|\ge \frac{1}{c}\cdot \min \{|A|^{2-\frac{1}{2^{d-2}}} , p\}$ with $c=2^{\frac{2^{d-1}-1}{2^{d-2}}}$. Therefore, $|\Delta(A^d)|\ge \frac{p}{c}$ under the condition $|A|\ge p^{\frac{2^{d-2}}{2^{d-1}-1}}$. However, this result again only gives us a positive proportion of all distances, and does not tell us the number of pairs of any given distance. 

In this paper, we will show that if $\mathcal{E} \subset \mathbb{F}_p^d$ has Cartesian product structure, we can break the exponent $\frac{d+1}{2}$ due to Iosevich and Rudnev \cite{ir} and still cover all possible distances. Our main tool is the point-plane incidence bound due to Rudnev \cite{R}.

Our first result is for odd dimensional cases. 
\bigskip
\begin{theorem}\label{thm1}
Let $\mathbb{F}_p$ be a prime field, and $A$  be a set in $\mathbb{F}_p$. For an integer $d\ge 3$, suppose the set $A^{2d+1}\subset \mathbb{F}_p^{2d+1}$ satisfies \[|A^{2d+1}|\gtrsim_d p^{\frac{2d+2}{2}-\frac{3\cdot 2^{d-2}-d-1}{3\cdot 2^{d-1}-1}},\]
then we have 
\begin{itemize}
\item The distance set covers all elements in $\mathbb{F}_p$, namely,
\[\Delta(A^{2d+1})=\underbrace{(A-A)^2 + \dots + (A-A)^2}_{2d+1~\textup{terms}}=\mathbb{F}_p.\]
\item In addition, the number of pairs $(\mathbf{x}, \mathbf{y})\in A^{2d+1}\times A^{2d+1}$ satisfying $||\mathbf{x}-\mathbf{y}||=\lambda$ is $\sim p^{-1}|A|^{4d+2}$ for any $\lambda\in \mathbb{F}_p$. 
\end{itemize}
\end{theorem}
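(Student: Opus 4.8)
The plan is to reduce the whole statement to a single character-sum estimate and then extract the required cancellation from Rudnev's point-plane incidence bound. First I would note that it suffices to prove the second (pointwise) bullet: if every $\lambda\in\Fp$ is realised $\sim p^{-1}|A|^{4d+2}$ times, then in particular that count is positive, so $\Delta(A^{2d+1})=\Fp$ follows for free. Fix a nontrivial additive character $\chi$ of $\Fp$ and set
\[
S_t := \sum_{a,b\in A}\chi\big(t(a-b)^2\big), \qquad t\in\Fp,
\]
so that $S_0=|A|^2$. Expanding the indicator of $\{\|\mathbf x-\mathbf y\|=\lambda\}$ and using the Cartesian-product structure of $A^{2d+1}$, the number $\nu(\lambda)$ of pairs at distance $\lambda$ factorises as
\[
\nu(\lambda) = \frac{|A|^{4d+2}}{p} + \frac1p\sum_{t\neq 0}\chi(-t\lambda)\,S_t^{\,2d+1}.
\]
Thus the entire theorem reduces to the uniform bound
\begin{equation*}
\sum_{t\neq 0}|S_t|^{\,2d+1} \ll |A|^{4d+2}, \tag{$\ast$}
\end{equation*}
which forces the error term to be smaller than the main term for every $\lambda$ simultaneously.

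To prove $(\ast)$ I would not estimate the odd power directly but instead control the even moments
\[
\sum_{t}|S_t|^{2m} = p\,E_m, \qquad E_m := \#\Big\{(a_i,b_i,c_i,d_i)_{i\le m}\in A^{4m} : \sum_{i=1}^m (a_i-b_i)^2 = \sum_{i=1}^m (c_i-d_i)^2\Big\},
\]
together with a uniform bound $M:=\max_{t\neq 0}|S_t|$, and then decompose $(\ast)$ dyadically according to the size of $|S_t|$. Writing $T_U=\{t\neq0:|S_t|\sim U\}$ one has $U^{2m}|T_U|\le pE_m$, whence
\[
\sum_{t\neq0}|S_t|^{2d+1}\ll \sum_{U\ \text{dyadic}}U^{\,2d+1}\,|T_U|,
\]
and on each scale one uses whichever moment bound is the more efficient. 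This dyadic pigeonholing is precisely what produces the logarithmic losses recorded by the symbol $\gtrsim_d$. The heart of the matter is a sharp upper bound for $E_m$, and here I would use the factorisation $(a-b)^2-(c-d)^2=(a-b-c+d)(a-b+c-d)$ to rewrite the defining equation of $E_m$ as a system of the shape $\sum_i X_iY_i=\sum_i Z_iW_i$; grouping the variables into the three coordinates of $\Fp^3$ turns this count into an incidence count between a point set and a set of planes, to which Rudnev's bound applies.

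The exponent $\tfrac{3\cdot2^{d-2}-d-1}{3\cdot2^{d-1}-1}$ signals that the bound on $E_m$ is obtained not in one shot but by an induction on the number of quadratic summands, doubling it at each step. Concretely I would establish, by induction, an estimate of the form $E_{2k}\ll\Phi(E_k)$ in which one Cauchy--Schwarz step squares the number of summands while a single application of the point-plane incidence bound supplies the cancellation; iterating this $d$ times accounts for the factors $2^{d-2}$ and $2^{d-1}$, and the constant $3$ reflects the three coordinates of the ambient $\Fp^3$ in Rudnev's theorem. Feeding the resulting bound for the relevant $E_m$, together with the uniform bound $M$ (itself obtainable from one application of the incidence bound or from a Gauss-sum estimate), into the dyadic sum and optimising over the scales $U$ yields $(\ast)$ exactly when $|A|^{2d+1}\gtrsim_d p^{(d+1)-\beta}$ with $\beta=\tfrac{3\cdot2^{d-2}-d-1}{3\cdot2^{d-1}-1}$, which is the stated hypothesis.

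The main obstacle is the inductive incidence estimate for $E_m$ and the matching of exponents through the recursion. Rudnev's bound carries three terms --- the expected main term $|P||\Pi|/p$, the term $|P|^{1/2}|\Pi|$, and the collinear term $k|\Pi|$ --- and for the doubling to close one must verify on every scale that the incidence-geometric term dominates, bound the contribution of the degenerate (diagonal) solutions of the quadratic system, and check the admissibility hypotheses of the theorem (notably that the relevant point set has size $O(p^2)$). Propagating these lower-order terms cleanly through $d$ iterations of the doubling, while ensuring that the accumulated constants remain within the claimed $\gtrsim_d$ losses, is where the real work lies; by comparison, the reduction to $(\ast)$ and the dyadic decomposition are routine.
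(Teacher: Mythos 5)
Your reduction is sound, and it is in substance the paper's own reduction in lightly disguised form. The identity for $\nu(\lambda)$ is correct, and instead of your dyadic decomposition over scales $U$ (which is unnecessary here), a single H\"older step closes it: $\sum_{t\neq 0}|S_t|^{2d+1}\le \bigl(\max_{t\neq 0}|S_t|\bigr)\sum_{t}|S_t|^{2d}=M\cdot pE_d\bigl((A-A)^2\bigr)$, and Cauchy--Schwarz in one variable followed by completing the sum gives $M\le |A|p^{1/2}$, so the error in $\nu(\lambda)$ is at most $p^{1/2}|A|E_d((A-A)^2)$ --- exactly the paper's bound (\ref{eqx1}), which it obtains by applying the weighted incidence estimate of Lemma \ref{fourier} to the multisets $(2x,\,x^2+\sum_i(y_i-z_i)^2)$ and $(-t,\,t^2+\sum_i(u_i-v_i)^2)$. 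One can also check that your target $(\ast)$, fed the paper's bound on $E_d$, produces precisely the stated threshold exponent. (A small point: $(\ast)$ with an unspecified $\ll$ constant only gives error $O(\text{main term})$, not positivity of $\nu(\lambda)$; you need the error to be a small fraction of $p^{-1}|A|^{4d+2}$, which is what the implicit constants in $\gtrsim_d$ are for.)

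The genuine gap is the estimate for $E_m$, which you explicitly defer as ``where the real work lies,'' and the mechanism you sketch for it would not close. First, the factorization $(a-b)^2-(c-d)^2=(a-b-c+d)(a-b+c-d)$ turns the defining equation into $\sum_{i\le m}X_iY_i=\sum_{i\le m}Z_iW_i$, but for $m\ge 2$ this has $4m$ independent blocks of variables and cannot be ``grouped into the three coordinates of $\mathbb{F}_p^3$'': Theorem \ref{thm:rudnev} counts incidences for a single affine-bilinear relation between a point and a plane in $\mathbb{F}_p^3$, so any such grouping forces multiplicity weights that your sketch does not handle. Second, the doubling recursion $E_{2k}\ll\Phi(E_k)$ has the wrong shape: splitting the $2k$ summands into halves reduces $E_{2k}$ to the additive energy of the weighted multiset $k(A-A)^2$, an equation $t_1+t_1'=t_2+t_2'$ in four weighted variables with no bilinear structure left, on which Rudnev's bound has no purchase. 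What the paper actually does (Lemma \ref{thm-long}) is peel off \emph{one} quadratic summand per step: it writes $E_d=\sum_{t_1,t_2}r_{(d-1)(A-A)^2}(t_1)\,r_{(d-1)(A-A)^2}(t_2)\,f(t_1,t_2)$, dyadically pigeonholes the two representation weights (this, not your decomposition of $|S_t|$, is the source of the logarithmic losses), and applies Theorem \ref{thm:rudnev} to the points $(-2a,\,e,\,t_1+a^2-e^2)$ and planes $bX+2cY+Z=t_2-b^2+c^2$ with collinearity parameter $k=|A|$, obtaining $E_d\le Cd^2(\log|A|)^2\bigl(|A|^{4d}/p+|A|^{2d+1}\sqrt{E_{d-1}}\bigr)$. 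The exponents $2^{d-1}$ and $2^{d-2}$ in the threshold then arise from the square root of $E_{d-1}$ taken at each of the $d-1$ steps of this one-at-a-time recursion (Corollary \ref{cor-hm1}, with trivial base case $E_1\le|A|^3$), not from any doubling of the number of summands, and the factor $3$ is an artifact of this numerology rather than of the ambient dimension of Rudnev's theorem. So your proposal is incomplete at exactly the step that separates this result from the classical $(d+1)/2$ threshold, and the route you indicate toward that step would need to be replaced by the incremental weighted-incidence recursion.
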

\bigskip
\begin{corollary}
For $A\subset \mathbb{F}_p$, suppose that $|A|\gtrsim p^{6/11}$, then we have 
\[\Delta(A^{7})=(A-A)^2+(A-A)^2+(A-A)^2+(A-A)^2+(A-A)^2+(A-A)^2+(A-A)^2=\mathbb{F}_p.\]
\end{corollary}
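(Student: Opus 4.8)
The plan is to deduce the corollary as the single instance $d=3$ of Theorem~\ref{thm1}. Since the statement concerns $A^{7}$, I set $2d+1=7$, i.e.\ $d=3$, and verify that the cardinality hypothesis of Theorem~\ref{thm1} collapses to precisely $|A|\gtrsim p^{6/11}$ for this value of $d$. Everything then reduces to evaluating the exponent appearing in the theorem and passing from a power of $|A^{2d+1}|$ to the corresponding power of $|A|$.

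First I would evaluate the exponent $\frac{2d+2}{2}-\frac{3\cdot 2^{d-2}-d-1}{3\cdot 2^{d-1}-1}$ at $d=3$. The first term is $\frac{2\cdot 3+2}{2}=4$. For the second term, the numerator equals $3\cdot 2^{1}-3-1=2$ and the denominator equals $3\cdot 2^{2}-1=11$, so the exponent is $4-\frac{2}{11}=\frac{42}{11}$. Thus Theorem~\ref{thm1} applies as soon as $|A^{7}|=|A|^{7}\gtrsim_{3} p^{42/11}$.

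Next I would take seventh roots. The inequality $|A|^{7}\gtrsim p^{42/11}$ is equivalent to $|A|\gtrsim p^{6/11}$, because $\frac{42}{11\cdot 7}=\frac{6}{11}$; the implicit constants and logarithmic factors hidden in the $\gtrsim_{3}$ notation are absorbed into the $\gtrsim$ notation and do not affect the stated exponent. Consequently the hypothesis $|A|\gtrsim p^{6/11}$ of the corollary guarantees the hypothesis of Theorem~\ref{thm1} for $d=3$, and the first bullet of that theorem yields $\Delta(A^{7})=\mathbb{F}_p$ together with the explicit description of $\Delta(A^{7})$ as the sum of seven copies of $(A-A)^{2}$.

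Because the corollary is a direct specialization, I do not expect any substantive obstacle beyond the elementary arithmetic of the exponent. The only point meriting a line of care is the bookkeeping with the $\gtrsim$ notation when converting a lower bound on $|A|^{7}$ into one on $|A|$; this changes only implicit constants and logarithmic factors and leaves the power $6/11$ intact.
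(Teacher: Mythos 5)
Your proposal is correct and matches the paper's (implicit) proof exactly: the corollary is the case $d=3$ of Theorem \ref{thm1}, where the exponent evaluates to $\frac{2\cdot 3+2}{2}-\frac{3\cdot 2^{1}-3-1}{3\cdot 2^{2}-1}=4-\frac{2}{11}=\frac{42}{11}$, and taking seventh roots of $|A|^{7}\gtrsim p^{42/11}$ gives $|A|\gtrsim p^{6/11}$. Your arithmetic and the handling of the $\gtrsim$ notation are both accurate.
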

\bigskip
Our second result is for  even dimensional cases. 
\bigskip
\begin{theorem}\label{thm2}
Let $\mathbb{F}_p$ be a prime field, and $A$  be a set in $\mathbb{F}_p$. For an integer $d\ge 3$, suppose the set $A^{2d}\subset \mathbb{F}_p^{2d}$ satisfies \[|A^{2d}|\gtrsim_d p^{\frac{2d+1}{2}-\frac{2^d-2d-1}{2^{d+1}-2}},\]
then we have 
\begin{itemize}
\item The distance set covers all elements in $\mathbb{F}_p$, namely, 
\[\Delta(A^{2d})=\underbrace{(A-A)^2 + \dots + (A-A)^2}_{2d~\textup{terms}}=\mathbb{F}_p.\]
\item In addition, the number of pairs $(\mathbf{x}, \mathbf{y})\in A^{2d}\times A^{2d}$ satisfying $||\mathbf{x}-\mathbf{y}||=\lambda$ is $\sim p^{-1}|A|^{4d}$ for any $\lambda\in \mathbb{F}_p$. 
\end{itemize}
\end{theorem}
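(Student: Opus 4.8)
The plan is to reduce both conclusions to a single uniform bound on a moment of an exponential sum, and then to feed that moment into Rudnev's point-plane incidence bound \cite{R}, applied once per coordinate. Fix a nontrivial additive character $\chi$ of $\Fp$ and, for $\lambda\in\Fp$, let $N(\lambda)$ denote the number of pairs $(\mathbf{x},\mathbf{y})\in A^{2d}\times A^{2d}$ with $\|\mathbf{x}-\mathbf{y}\|=\lambda$. Expanding the indicator of the event $\|\mathbf{x}-\mathbf{y}\|=\lambda$ via characters and using that the distance is a sum of $2d$ one-dimensional squared differences together with the Cartesian product structure of $A^{2d}$, the character sum factorises across coordinates and gives $N(\lambda)=\tfrac1p|A|^{4d}+\tfrac1p\sum_{t\neq0}\chi(-t\lambda)S(t)^{2d}$, where $S(t)=\sum_{a,b\in A}\chi(t(a-b)^2)$. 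The $t=0$ term is exactly the claimed main term $p^{-1}|A|^{4d}$, so both assertions — that every $\lambda$ is attained and that $N(\lambda)\sim p^{-1}|A|^{4d}$ — follow at once once I show the uniform estimate $\sum_{t\neq0}|S(t)|^{2d}\ll|A|^{4d}$, indeed with small enough implied constant that the error is dominated by the main term.

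Next I would recast this moment as a solution count. By orthogonality $\sum_{t\in\Fp}|S(t)|^{2d}=p\,M_d$, where $M_d$ is the number of solutions of $\sum_{i=1}^d(a_i-b_i)^2=\sum_{i=1}^d(c_i-d_i)^2$ with all variables in $A$; subtracting the $t=0$ contribution, the required bound is equivalent to $M_d\le(1+o(1))|A|^{4d}/p$, i.e. to showing that this count is essentially what a single linear constraint modulo $p$ predicts. The incidence input enters through the factorisation $(a-b)^2-(c-d)^2=(a-b-c+d)(a-b+c-d)$: grouping one square from each side turns the two-square count $M_2$ into the number of solutions of a product equation $P_1Q_1=P_2Q_2$ in four-fold differences of $A$, which is precisely the configuration Rudnev's bound controls after reading it as an incidence problem between points and planes in $\Fp^3$, yielding $M_2\ll|A|^8/p+(\text{lower order})$.

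To reach general $d$ I would iterate this one coordinate at a time. Adjoining one further squared difference to the sum and applying the same difference-of-squares factorisation to the newly matched pair sets up a recursion passing from the level-$(k-1)$ count to the level-$k$ count through a single fresh use of the point-plane bound. Each such application square-roots the deviation of the count from its expected value $|A|^{4k}/p$, so the relative gap to randomness is roughly halved at every step; carrying this out across all $d$ coordinates is what produces the geometric quantities $2^{d}-1$ and the exponent $\tfrac{2d+1}{2}-\tfrac{2^d-2d-1}{2^{d+1}-2}$ appearing in the hypothesis, the stated threshold on $|A^{2d}|=|A|^{2d}$ being exactly the point at which every accumulated error term falls below the main term $|A|^{4d}/p$.

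The main obstacle is the incidence step together with its iteration. Every application of Rudnev's bound carries a degenerate contribution controlled by the maximal number of collinear points — equivalently by concentration of the relevant difference sets on subfield- or progression-like structures — and I must check at each level that this term remains of lower order; this is where the prime-field hypothesis and the requirement that $|A|$ not be too large relative to $p$ are essential. Equally delicate is the constant bookkeeping: because the aim is to cover all of $\Fp$ and to pin $N(\lambda)$ down to within a constant factor rather than merely to recover a positive proportion of distances, the leading coefficient in the bound for $M_d$ must be shown to match $|A|^{4d}/p$ with no loss, so that $\sum_{t\neq0}|S(t)|^{2d}$ is genuinely $o(|A|^{4d})$. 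Tracking these constants through $d$ successive incidence applications, while absorbing the logarithmic and $d$-dependent factors into the $\gtrsim_d$ notation, is the technically demanding part of the argument.
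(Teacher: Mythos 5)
Your opening identities are correct: indeed $N(\lambda)=p^{-1}|A|^{4d}+p^{-1}\sum_{t\neq 0}\chi(-t\lambda)S(t)^{2d}$ with $S(t)=\sum_{a,b\in A}\chi(t(a-b)^2)$, and by orthogonality $\sum_{t}|S(t)|^{2d}=p\,E_d\left((A-A)^2\right)$, so your $M_d$ is exactly the energy $E_d\left((A-A)^2\right)$ of the paper. The genuine gap is the step where you take absolute values termwise in $t$ and declare that it suffices to prove $\sum_{t\neq0}|S(t)|^{2d}\ll|A|^{4d}$ ``with small enough implied constant'', i.e.\ $E_d\left((A-A)^2\right)\le (1+c)\,p^{-1}|A|^{4d}$ for an explicit $c<1$. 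This is an \emph{asymptotic formula for the energy with essentially sharp leading constant}, and no tool you invoke can deliver it. Rudnev's point-plane bound (Theorem \ref{thm:rudnev}) carries an unspecified absolute constant, and the recursive scheme you sketch — which is essentially the paper's Lemma \ref{thm-long} — requires a dyadic pigeonholing that by itself costs a factor of order $(d\log|A|)^2$ in front of the main term; the resulting bound (Corollary \ref{cor-hm1}) is
\[E_d\left((A-A)^2\right)\ll d^2(\log|A|)^2\,\frac{|A|^{4d}}{p}+d^4(\log|A|)^4\,|A|^{4d-2+\frac{1}{2^{d-1}}},\]
so the best available estimate for your error term $E_d-p^{-1}|A|^{4d}$ exceeds the main term $p^{-1}|A|^{4d}$ by an unbounded factor, and no strengthening of the threshold on $|A|$ repairs this, since the loss does not shrink as $|A|$ grows. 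Your own intermediate claim ``$M_2\ll|A|^8/p+(\text{lower order})$'' already illustrates the problem: an upper bound with an unspecified $\ll$-constant is strictly weaker than the $(1+o(1))$-asymptotic your reduction demands. From an upper bound on $E_d$ of this shape one can extract, via Cauchy--Schwarz, only $|\Delta(A^{2d})|\gg p/(d\log|A|)^2$ — a positive proportion of distances, i.e.\ precisely the kind of previously known result (as in \cite{CEHIK10, P}) the theorem is designed to surpass — and neither $\Delta(A^{2d})=\mathbb{F}_p$ nor $N(\lambda)\sim p^{-1}|A|^{4d}$ follows.

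The missing idea is to retain square-root cancellation over the $t$-sum instead of bounding it termwise, and the paper achieves this by treating two coordinates asymmetrically. It encodes the pairs as multisets $E,F\subset\mathbb{F}_p^3$ (with a bilinear interaction $e_1f_1+e_2f_2+e_3+f_3=\lambda$) whose multiplicity second moments equal $|A|^2E_{d-1}\left((A-A)^2\right)$, and applies the weighted Vinh-type incidence inequality of Lemma \ref{fourier1} — proved by Cauchy--Schwarz with completion of the point variable over the whole space, which is where the cancellation in $t$ is kept. This yields $\left|N(\lambda)-p^{-1}|A|^{4d}\right|\le p|A|^2E_{d-1}\left((A-A)^2\right)$, so the relative error is $p^2|A|^{2-4d}E_{d-1}$; inserting Corollary \ref{cor-hm1} at level $d-1$ makes it $\ll d^2(\log|A|)^2\,p|A|^{-2}+d^4(\log|A|)^4\,p^2|A|^{-4+\frac{1}{2^{d-2}}}$, and both factors tend to zero exactly under the stated hypothesis $|A|\gtrsim_d p^{2^{d-1}/(2^d-1)}$ (note $2^{d-1}/(2^d-1)>1/2$). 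In other words, the constant and logarithmic losses that are fatal in your symmetric moment argument are here multiplied by quantities that are $o(1)$. Your remaining ingredients are broadly sound — the difference-of-squares recursion is the right mechanism behind Lemma \ref{thm-long}, and your worry about collinear degeneracies is handled trivially in the paper's setup (a non-vertical line meets the point set in at most $|A|$ points by projecting to the first two coordinates, while vertical lines lie in no plane of the family; there is no subfield issue over $\mathbb{F}_p$) — but without the bilinear two-coordinate reduction of Lemma \ref{fourier1}, the proposal cannot reach either conclusion of the theorem.
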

\bigskip
\begin{corollary}
For $A\subset \mathbb{F}_p$, suppose that $|A|\gtrsim p^{4/7}$, then we have 
\[\Delta(A^{6})=(A-A)^2+(A-A)^2+(A-A)^2+(A-A)^2+(A-A)^2+(A-A)^2=\mathbb{F}_p.\]
\end{corollary}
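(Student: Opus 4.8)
The plan is to derive this corollary as the special case $d=3$ of Theorem \ref{thm2}. For $d=3$ we have $2d=6$, so $A^{6}\subset \mathbb{F}_p^6$ is precisely the setting of the theorem, and the six-term identity $\Delta(A^6)=(A-A)^2+\dots+(A-A)^2$ claimed in the corollary is exactly the distance-set description produced by Theorem \ref{thm2}. The entire task therefore reduces to checking that the stated hypothesis $|A|\gtrsim p^{4/7}$ implies the cardinality condition required by the theorem.

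First I would evaluate the exponent appearing in Theorem \ref{thm2} at $d=3$. Substituting $2^d=8$, $2d=6$, and $2^{d+1}=16$ gives
\[
\frac{2d+1}{2}-\frac{2^d-2d-1}{2^{d+1}-2}=\frac{7}{2}-\frac{8-6-1}{16-2}=\frac{7}{2}-\frac{1}{14}=\frac{48}{14}=\frac{24}{7}.
\]
Thus Theorem \ref{thm2} with $d=3$ demands $|A^{6}|\gtrsim_d p^{24/7}$. Since $A^6$ is a Cartesian product we have $|A^6|=|A|^6$, so the assumption $|A|\gtrsim p^{4/7}$ yields $|A|^6\gtrsim p^{24/7}$; the logarithmic factor hidden in $\gtrsim$ is merely raised to the sixth power, which is absorbed into the $\gtrsim_d$ notation once $d=3$ is fixed. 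Hence the hypothesis of Theorem \ref{thm2} is satisfied.

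Applying Theorem \ref{thm2} then delivers both conclusions simultaneously: $\Delta(A^6)=\mathbb{F}_p$, and the number of pairs at each fixed distance is $\sim p^{-1}|A|^{12}$. There is no genuine obstacle in this argument; the whole content is the numerical simplification of the exponent, which I would double-check by confirming independently that $6\cdot\tfrac{4}{7}=\tfrac{24}{7}$ agrees with the evaluated exponent $\tfrac{7}{2}-\tfrac{1}{14}$.
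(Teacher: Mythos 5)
Your proposal is correct and is exactly the paper's (implicit) derivation: the corollary is the case $d=3$ of Theorem \ref{thm2}, where the exponent $\frac{2d+1}{2}-\frac{2^d-2d-1}{2^{d+1}-2}$ evaluates to $\frac{7}{2}-\frac{1}{14}=\frac{24}{7}$, so $|A^6|=|A|^6\gtrsim p^{24/7}$ follows from $|A|\gtrsim p^{4/7}$. Your handling of the logarithmic factors absorbed into $\gtrsim_d$ with $d=3$ fixed is also sound, so nothing is missing.
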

\begin{remark}
In the setting of arbitrary finite fields $\mathbb{F}_q$, we can not break the exponent $(d+1)/2$, and still cover all distances with the method in this paper and the distance energy in \cite[Lemma $3.1$]{HP}. More precisely, for $A\subset \mathbb{F}_q$, one can follow the proofs of Theorems \ref{thm1} and \ref{thm2} to get the conditions $|A^{2d+1}|\gg q^{\frac{2d+2}{2}+\frac{1}{4d}}$ and $|A^{2d}|\gg q^{\frac{2d+1}{2}+\frac{1}{4d-2}}$ for odd and even dimensions, respectively. 
\end{remark}
\bigskip
\begin{remark}
The Cauchy-Davenport theorem states that for $X, Y\subset \mathbb{F}_p$, we have $|X+Y|\ge \min\{p, |X|+|Y|-1\}$. It is not hard to check that $\Delta(A^{2d})=\Delta(A^d)+\Delta(A^d)$. The Chapman et al. 's result \cite{CEHIK10} tells us that $|\Delta(A^d)|\ge p/2$ whenever $|A|\gg p^{\frac{d}{2d-1}}$. Therefore, one can apply the Cauchy-Davenport theorem to show that $|\Delta(A^{2d})|\ge p-1$ under the condition $|A|\ge p^\frac{d}{2d-1}$. However, our set $A^{2d}$ lies on the $2d$-dimensional space $\mathbb{F}_p^{2d}$, thus the exponent $\frac{d}{2d-1}$ is worse than the Iosevich-Rudnev's exponent $\frac{2d+1}{4d}$. The same happens for odd dimensional spaces. Note that the bound $|\Delta(A^d)|\ge \frac{1}{c}\cdot \min \{|A|^{2-\frac{1}{2^{d-2}}} , p\}$ with $c=2^{\frac{2^{d-1}-1}{2^{d-2}}}$ in \cite{P} is not suitable for this approach since the constant factor $1/c$ is too small. 
\end{remark}

Let $\mathbb{F}_q$ be an arbitrary finite field, and $\mathcal{E}\subset \mathbb{F}_q^d$. The product set of $\mathcal{E}$, denoted by $\Pi(\mathcal{E})$, is defined as follows:
\[\Pi(\mathcal{E}):=\{\mathbf{x}\cdot \mathbf{y}\colon \mathbf{x}, \mathbf{y}\in \mathcal{E}\}.\]
Using Fourier analysis, Hart and Iosevich \cite{HI} proved that if $|\mathcal{E}|\gg q^{\frac{d+1}{2}}$, then $\Pi(\mathcal{E}) \supseteq \mathbb{F}_q\setminus \{0\}$. Moreover, under the same condition on the size of $\mathcal{E}$, we have the number of pairs $(\mathbf{x}, \mathbf{y})\in \mathcal{E}\times \mathcal{E}$ satisfying $\mathbf{x}\cdot \mathbf{y}=\lambda$ is $\sim q^{-1}|\mathcal{E}|^2$ for any $\lambda\ne 0$. If $\mathcal{E}$ has Cartesian product structure, i.e. $\mathcal{E}=A^d$ for some $A\subset \mathbb{F}_q$, then the condition $|\mathcal{E}|\gg q^{\frac{d+1}{2}}$ is equivalent with $|A|\gg q^{\frac{1}{2}+\frac{1}{2d}}$.

In the setting of prime fields $\mathbb{F}_p$, if $d=8$, Glibichuk and Konyagin \cite{Gli2} proved that for $A, B\subset \mathbb{F}_p$, if $|A|\lceil|B|/2\rceil\ge p$, then we have $8A\cdot B=\mathbb{F}_p$. This
result has been extended to arbitrary finite fields by Glibichuk and Rudnev \cite{Gl3}.

In this paper, using the techniques in the proof of Theorems \ref{thm2}, we are able to obtain the following theorem.
\bigskip
\begin{theorem}\label{tich}
For $A\subset \mathbb{F}_p$, suppose that $|A|\gtrsim p^{4/7}$, then we have 
\begin{itemize}
\item $6A\cdot A=A\cdot A+ A\cdot A+A\cdot A+A\cdot A+A\cdot A+A\cdot A=\mathbb{F}_p$.
\item For any $\lambda\in \mathbb{F}_p$, the number of pairs $(\mathbf{x}, \mathbf{y})\in A^6\times A^6$ such that $\mathbf{x}\cdot \mathbf{y}=\lambda$ is $\sim p^{-1}|A|^{12}$.
\end{itemize}
\end{theorem}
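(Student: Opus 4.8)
The plan is to follow the proof of Theorem~\ref{thm2} in the case $d=3$, replacing the quadratic form $(x-y)^2$ by the bilinear form $xy$; everything is governed by the same character sums and the same point-plane incidence estimate. Fix a nontrivial additive character $\chi$ of $\mathbb{F}_p$ and, for $\lambda\in\mathbb{F}_p$, set $N(\lambda):=\#\{(\mathbf{x},\mathbf{y})\in A^6\times A^6:\ \mathbf{x}\cdot\mathbf{y}=\lambda\}$. Writing the indicator of $\{\mathbf{x}\cdot\mathbf{y}=\lambda\}$ via $\chi$ and using that $\mathbf{x}\cdot\mathbf{y}=\sum_{i=1}^{6}x_iy_i$ splits across coordinates, I would first derive
\[
N(\lambda)=\frac{1}{p}\sum_{s\in\mathbb{F}_p}\chi(-s\lambda)\,S(s)^6,\qquad S(s):=\sum_{a,b\in A}\chi(s\,ab).
\]
The term $s=0$ gives $S(0)^6=|A|^{12}$, i.e.\ the expected main term $p^{-1}|A|^{12}$, and since $|S(s)^6|=|S(s)|^6$ the remaining terms are bounded by $p^{-1}\sum_{s\neq 0}|S(s)|^6$. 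So the entire statement reduces to showing that this error is $o(p^{-1}|A|^{12})$.

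Next I would identify the error with a product energy. By orthogonality,
\[
\frac{1}{p}\sum_{s\in\mathbb{F}_p}|S(s)|^6=E:=\#\Big\{(a_i,b_i)_{i=1}^{6}\in A^{12}:\ \textstyle\sum_{i=1}^{3}a_ib_i=\sum_{i=4}^{6}a_ib_i\Big\},
\]
so that $\big|N(\lambda)-p^{-1}|A|^{12}\big|\le p^{-1}\sum_{s\neq0}|S(s)|^6=E-p^{-1}|A|^{12}$ for every $\lambda$. Cauchy--Schwarz gives $E\ge p^{-1}|A|^{12}$, so the error is one-sided and everything comes down to the upper bound $E=(1+o(1))\,p^{-1}|A|^{12}$. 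This quantity $E$ is precisely the product analogue of the three-dimensional distance energy that controls Theorem~\ref{thm2} in the case $d=3$, which is exactly why the same method applies.

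The main step, and the hard part, is bounding $E$ through Rudnev's point-plane incidence theorem. I would interpret $\sum_{i=1}^{3}a_ib_i=\sum_{i=4}^{6}a_ib_i$ as a point-plane incidence count in $\mathbb{F}_p^3$: reading $(b_1,b_2,b_3)\in A^3$ as points and, for each choice of the remaining variables, the plane $a_1X_1+a_2X_2+a_3X_3=a_4b_4+a_5b_5+a_6b_6$; a point lies on such a plane exactly when the defining equation holds. Since at most $|A|$ points of $A^3$ are collinear, Rudnev's bound produces $E\ll p^{-1}|A|^{12}+(\text{error})$. The obstacle is that this most naive choice only makes the error beat the main term for $|A|\gtrsim p^{2/3}$; to reach the sharper threshold $|A|\gtrsim p^{4/7}$ one must optimize the reduction exactly as in Theorem~\ref{thm2}, balancing the point set and the plane set while respecting the constraint $|P|\le p^2$ in Rudnev's theorem and incorporating the multiplicative structure of $A\cdot A$ into the point coordinates. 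This is the step where the Cartesian product structure and the exponent $4/7$ genuinely enter, and I expect it to require the same careful incidence bookkeeping as the even-dimensional distance argument.

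Finally I would assemble the pieces. Once the energy bound $E=(1+o(1))\,p^{-1}|A|^{12}$ is established under $|A|\gtrsim p^{4/7}$, the inequality $\big|N(\lambda)-p^{-1}|A|^{12}\big|\le E-p^{-1}|A|^{12}=o(p^{-1}|A|^{12})$ holds uniformly in $\lambda$. This immediately yields $N(\lambda)\sim p^{-1}|A|^{12}$ for every $\lambda\in\mathbb{F}_p$, which is the second bullet, and in particular $N(\lambda)>0$ for all $\lambda$, i.e.\ $6A\cdot A=\mathbb{F}_p$, which is the first bullet.
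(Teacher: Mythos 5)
Your Fourier set-up is fine, and the identity $p^{-1}\sum_{s}|S(s)|^6=E_3(A\cdot A)$ (the three-fold product energy) is correct, but the reduction you build on it contains a genuine gap: by bounding the error by $E-p^{-1}|A|^{12}$, you have committed yourself to proving the asymptotically sharp estimate $E_3(A\cdot A)\le (1+o(1))\,p^{-1}|A|^{12}$, with leading constant exactly $1$. No incidence-based tool in this circle of ideas can deliver that: Rudnev's bound (Theorem \ref{thm:rudnev}) carries an unspecified absolute constant, and the recursive energy machinery of the paper (Lemma \ref{thm-longx}, Corollary \ref{cor-hm1x}) loses a factor $Cd^2(\log|A|)^2$ \emph{in front of the main term} $|A|^{4d}/p$, coming from the dyadic pigeonholing. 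The best you can conclude is $E_3(A\cdot A)\ll (\log|A|)^2p^{-1}|A|^{12}+\cdots$, so your bound $E-p^{-1}|A|^{12}$ may exceed the main term by logarithmic factors, and you obtain neither $N(\lambda)\sim p^{-1}|A|^{12}$ nor even $N(\lambda)>0$. The ``careful incidence bookkeeping'' you defer to the final step cannot repair this, because the defect is in the leading constant, not in the exponent; indeed $E_3\le(1+o(1))p^{-1}|A|^{12}$ amounts to $L^2$-flatness of the three-variable counter $r(\mu)=\#\{(\mathbf{a},\mathbf{b})\in A^3\times A^3:\mathbf{a}\cdot\mathbf{b}=\mu\}$, essentially the three-dimensional version of the theorem, which is out of reach at $|A|\sim p^{4/7}$. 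You have also misidentified the controlling quantity: Theorem \ref{thm2} with $d=3$ is run with the \emph{lower} energy $E_{d-1}=E_2$, not with $E_3$.

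The paper's proof is structured precisely to avoid this trap: one bilinear block of coordinates is kept outside the energy. It applies the weighted Vinh-type bound, Lemma \ref{fourier1}, to the multi-sets $E=\{(x_1,x_2,\,y_1z_1+y_2z_2)\}$ and $F=\{(t_1,t_2,\,u_1v_1+u_2v_2)\}$ in $\mathbb{F}_p^3$ (all variables in $A$), so that $N(E,F,\lambda)=N(\lambda)$, the main term $|E||F|/p=|A|^{12}/p$ appears with constant exactly $1$, and the energy enters only through the second moments $\sum m_E^2=\sum m_F^2=|A|^2E_2(A\cdot A)$ inside the error $p\,|A|^2E_2(A\cdot A)$, where the constants and logarithms of Corollary \ref{cor-hm1x} (with $d=2$) are harmless. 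You can reach the same endpoint in your own language by not pushing all six coordinates into the energy: estimate
\[\sum_{s\ne0}|S(s)|^6\ \le\ \max_{s\ne0}|S(s)|^2\sum_{s\in\mathbb{F}_p}|S(s)|^4\ =\ \max_{s\ne0}|S(s)|^2\cdot p\,E_2(A\cdot A),\]
together with the completion bound $\max_{s\ne0}|S(s)|^2\le p|A|^2$ (Cauchy--Schwarz in $a$, then extend the sum over $sa$ to all of $\mathbb{F}_p$). Since Corollary \ref{cor-hm1x} gives $E_2(A\cdot A)\ll (\log|A|)^2|A|^8/p+(\log|A|)^4|A|^{13/2}$, the error in $N(\lambda)$ is $\ll (\log|A|)^2|A|^{10}+(\log|A|)^4\,p\,|A|^{17/2}$, which is $o\bigl(p^{-1}|A|^{12}\bigr)$ exactly when $|A|\gtrsim p^{4/7}$ --- the paper's threshold. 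With this replacement of your third paragraph, the rest of your argument (main term extraction and the deduction of both bullets) goes through as you wrote it.
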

\bigskip
Note that our exponent $4/7$ improves the exponent $7/12$ of Hart and Iosevich \cite{HI} in the case $d=6$. The following is the conjecture due to Iosevich. 
\bigskip
\begin{conjecture}
Let $A$ be a set in $\mathbb{F}_p$, suppose that $|A|\gg p^{\frac{1}{2}+\epsilon}$ for any $\epsilon>0$, then we have 
\[A\cdot A+A\cdot A=\mathbb{F}_p, ~~(A-A)^2+(A-A)^2=\mathbb{F}_p.\]
\end{conjecture}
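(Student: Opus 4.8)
The plan is to run the standard additive-character count and reduce each of the two claims to a single bilinear-energy estimate, to be attacked with Rudnev's point-plane incidence bound. Since $A\cdot A+A\cdot A$ is the dot-product set $\Pi(A^2)$ and $(A-A)^2+(A-A)^2$ is the distance set $\Delta(A^2)$ of $A^2\subset\mathbb{F}_p^2$, and since (exactly as in the passage from Theorem \ref{thm2} to Theorem \ref{tich}) the two are governed by the same shape of character sum, I would treat the dot-product statement in detail and transfer it to the distance statement by replacing $\chi(tab)$ with $\chi(t(a-b)^2)$ and completing the square. Fix a nontrivial additive character $\chi$ of $\mathbb{F}_p$, set $S(t):=\sum_{a,b\in A}\chi(tab)$, and for $\lambda\in\mathbb{F}_p$ let $N(\lambda)$ be the number of $(a_1,a_2,b_1,b_2)\in A^4$ with $a_1b_1+a_2b_2=\lambda$. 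Orthogonality gives
\[N(\lambda)=\frac{|A|^4}{p}+\frac{1}{p}\sum_{t\neq0}\chi(-t\lambda)\,S(t)^2,\]
so it suffices to show that the error term is strictly smaller than the main term $|A|^4/p$ for every $\lambda$.

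The second step controls the error in mean square over $\lambda$, where the algebra is clean. Expanding and using orthogonality,
\[\sum_{\lambda\in\mathbb{F}_p}\left(N(\lambda)-\frac{|A|^4}{p}\right)^2=\frac{1}{p}\sum_{t\neq0}|S(t)|^4=E-\frac{|A|^8}{p},\]
where $E:=|\{(a_i,b_i)_{i=1}^4\in A^8:\ a_1b_1+a_2b_2=a_3b_3+a_4b_4\}|=\sum_\lambda N(\lambda)^2$ is the dot-product energy of $A^2$. Thus the whole question is whether $E$ is close to its expected value $|A|^8/p$. Here I would invoke Rudnev's point-plane incidence bound, exactly as the distance energy is bounded in \cite{HP} and in the proof of Theorem \ref{thm2}: writing $E=\sum_n r(n)^2$ with $r(n)=|\{ab-a'b'=n\}|$ and realizing $\max_{n\neq0}r(n)$ as an incidence count between the $|A|^2$ points $A\times A$ and the hyperbolas $\{ab=c\}$, one expects $E\lesssim|A|^8/p+|A|^6$. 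Since $|A|^6=o(|A|^8/p)$ precisely when $|A|\gg p^{1/2}$, this would confirm $E=(1+o(1))|A|^8/p$ at the conjectured size, whence at most $p^{2}/|A|^2\le p^{1-2\epsilon}=o(p)$ values of $\lambda$ are uncovered. In other words, the mean-square (almost-all) form of the conjecture, together with the correct count $N(\lambda)\sim|A|^4/p$ on average, is essentially within reach of the paper's own method already at $|A|\sim p^{1/2+\epsilon}$.

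The hard part, and the reason the statement is only a conjecture, is upgrading ``almost every $\lambda$'' to ``every $\lambda$.'' The only uniform bound on the error is Cauchy--Schwarz,
\[\left|\frac{1}{p}\sum_{t\neq0}\chi(-t\lambda)S(t)^2\right|\le\frac{1}{p}\sum_{t\neq0}|S(t)|^2=E_\times(A)-\frac{|A|^4}{p},\]
where $E_\times(A)$ is the multiplicative energy of $A$; this beats $|A|^4/p$ only when $E_\times(A)\ll|A|^4/p$, i.e.\ when $A$ is multiplicatively unstructured. For a geometric progression $A$ (or a coset of a small multiplicative subgroup) one has $E_\times(A)\sim|A|^3\gg|A|^4/p$ at the threshold, so the Fourier bound fails outright and the mean-square information above cannot by itself exclude an uncovered value. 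I therefore expect the proof to hinge on a dichotomy: if $E_\times(A)$ is near its minimum, the Cauchy--Schwarz bound closes and covers every element; if instead $E_\times(A)$ is large, I would abandon Fourier analysis and use growth, noting that for a geometric progression $A\cdot A$ is itself a short progression whose sumset expands, so that $A\cdot A+A\cdot A=\mathbb{F}_p$ would follow from a sum-product expansion estimate combined with the Cauchy--Davenport theorem, in the spirit of the Remark after Theorem \ref{thm2}.

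The crux is to fuse the two regimes without loss down to the exponent $1/2$. The incidence error $|A|^6$ in the energy bound crosses the main term $|A|^8/p$ exactly as $|A|$ approaches $p^{1/2}$, so there is no slack to spend on the structured case; and the available sum-product estimates (of Bourgain--Glibichuk--Konyagin or Rudnev-growth type) are not yet quantitatively efficient enough at this critical exponent to cover the multiplicatively structured sets cleanly, nor to beat back the last $p^{1-2\epsilon}$ potentially uncovered values uniformly. Closing that gap --- a lossless interpolation between the incidence bound and sum-product growth at $|A|=p^{1/2+\epsilon}$ --- is where I expect essentially all the difficulty to concentrate, and it is precisely the step that the present method, which only breaks $(d+1)/2$ by a power that shrinks toward the critical exponent, cannot yet supply.
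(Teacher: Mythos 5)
The statement you were given is labelled a \emph{Conjecture} in the paper (attributed to Iosevich), and the paper offers no proof of it, so there is no paper proof to compare against. Your proposal is, correctly, not a proof either: you reduce both claims to the dot-product energy of $A^2$ via the standard character-sum identity, establish (correctly) the mean-square formula $\sum_\lambda\bigl(N(\lambda)-|A|^4/p\bigr)^2=E-|A|^8/p$, and then candidly concede that upgrading ``almost every $\lambda$'' to ``every $\lambda$'' is exactly the open difficulty. That diagnosis is sound and consistent with the paper's own state of the art, which reaches only six-fold sums $A\cdot A+\cdots+A\cdot A=\mathbb{F}_p$ at the exponent $4/7$ (Theorems \ref{thm2} and \ref{tich}), nowhere near two-fold sums at $1/2+\epsilon$. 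Your observation that the uniform Cauchy--Schwarz bound degenerates to the multiplicative energy $E_\times(A)$, and hence fails for geometric progressions at the threshold, pinpoints the genuine obstruction.

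One quantitative overstatement should be flagged. The energy bound $E\lesssim |A|^8/p+|A|^6$ that you ``expect'' from Rudnev's incidence theorem is not what the method delivers: the paper's Corollary \ref{cor-hm1x} with $d=2$ gives only $E_2(A\cdot A)\ll (\log|A|)^2\,|A|^8/p+(\log|A|)^4\,|A|^{6+\frac{1}{2}}$. With the exponent $13/2$ in the second term, your count of uncovered $\lambda$ becomes $\lesssim p^2|A|^{-3/2}$, which is $o(p)$ only for $|A|\gg p^{2/3}$, not $p^{1/2+\epsilon}$; worse, the unspecified constant and logarithmic factors multiplying the main term $|A|^8/p$ make it impossible to extract $E-|A|^8/p=o(|A|^8/p)$ from such a bound at all --- the almost-all argument needs the main term with constant $1+o(1)$, which the incidence recursion does not provide. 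Note also that $r(0)=E_\times(A)$ can be of order $|A|^3$ for a geometric progression, so $E\gtrsim |A|^6$ and your ``expected'' bound is best possible; to my knowledge it is itself open near $|A|\sim p^{1/2}$. So even the almost-all form of the conjecture at $p^{1/2+\epsilon}$ is not ``essentially within reach of the paper's own method,'' contrary to your second paragraph. That said, since the statement is an open conjecture, your refusal to claim a complete proof is the right call, and your proposed dichotomy (Fourier analysis when $E_\times(A)$ is small, sum-product growth when it is large) is a reasonable sketch of where any eventual proof would have to concentrate its effort.
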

In the spirit of sum-product problems, it has been proved in \cite{P} that for $A\subset \mathbb{F}_p$, if $|A|\ll p^{\frac{1}{2}+\frac{1}{5\cdot 2^{d-1}-2}}$, $d\ge 2$,  then we have 
\[\max\left\lbrace |\Delta(A^d)|, |\Pi(A^d)|\right\rbrace \gg |A|^{2-\frac{1}{5\cdot 2^d-3}}.\]
Using our energies (Lemmas \ref{thm-long} and \ref{thm-longx} below), and the prime field analogue of Balog-Wooley decomposition energy due to Rudnev, Shkredov, and Stevens \cite{RR}, we are able to give the energy variant of this result. 
\bigskip
\begin{theorem}\label{last}
Let $d\ge 2$ be an integer, $A$ be a set in $\mathbb{F}_p$ with $|A|\ll p^{\frac{1}{2}+\frac{1}{5\cdot 2^{d-1}-2}}$. There exist two disjoint subsets $B$ and $C$ of $A$ such that $A=B\sqcup C$ and 
\[\max\left\lbrace E_d((B-B)^2), E_d(C\cdot C)\right\rbrace \ll d^4(\log|A|)^4 |A|^{4d-2+\frac{1}{5\cdot 2^{d-3}}},\]
where $E_d\left((B-B)^2\right)$ is the number of $4d$-tuples $\{(a_i, b_i, c_i, e_i)\}_{i=1}^d$ with $a_i, c_i, b_i, e_i\in B$
such that $(a_1-b_1)^2+\cdots+(a_d-b_d)^2=(c_1-e_1)^2+\cdots+(c_d-e_d)^2,$ and $E_d\left(C\cdot C\right)$ be the number of $4d$-tuples $\{(a_i, b_i, c_i, e_i)\}_{i=1}^d$ with $a_i, c_i, b_i, e_i\in C$
such that $a_1b_1+\cdots+a_db_d=c_1e_1+\cdots+c_de_d.$
\end{theorem}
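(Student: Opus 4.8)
The plan is to read Theorem \ref{last} as the energy incarnation of the estimate $\max\{|\Delta(A^d)|,|\Pi(A^d)|\}\gg|A|^{2-1/(5\cdot2^d-3)}$ from \cite{P}, and to prove it by feeding a Balog--Wooley type splitting of $A$ into the two energy estimates of Lemmas \ref{thm-long} and \ref{thm-longx}. Concretely, I would first invoke the prime-field Balog--Wooley decomposition of Rudnev, Shkredov and Stevens \cite{RR} to partition $A=B\sqcup C$ so that the standard additive energy $E^{+}(B)$ and the standard multiplicative energy $E^{\times}(C)$ are simultaneously smaller than the trivial bound $|A|^3$ by a fixed power of $|A|$. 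The point of the pairing is that $E_d\big((B-B)^2\big)$ is an additively driven quantity and should be governed by $E^{+}(B)$, while $E_d(C\cdot C)$ is multiplicatively driven and should be governed by $E^{\times}(C)$; the decomposition is exactly engineered so that one piece is ``additively unstructured'' and the other ``multiplicatively unstructured,'' which is what the maximum in the statement reflects.

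The two energy estimates are the workhorses, and I will use them in the schematic form
\[
E_d\big((X-X)^2\big)\ \ll\ \frac{|X|^{4d}}{p}\ +\ \big(\text{prefactor}\big)\cdot E^{+}(X),\qquad
E_d(X\cdot X)\ \ll\ \frac{|X|^{4d}}{p}\ +\ \big(\text{prefactor}\big)\cdot E^{\times}(X),
\]
with $X=B$ and $X=C$ respectively, where the main term $|X|^{4d}/p$ is the expected count and the second term is the output of Rudnev's point--plane incidence bound after dyadic pigeonholing, expressed through the base energy of $X$. The first thing to verify is that the hypothesis on $|A|$ is precisely calibrated to the main term. Writing $|A|\ll p^{\beta}$ with $\beta=\tfrac12+\tfrac{1}{5\cdot2^{d-1}-2}$ and setting $s=5\cdot2^{d-1}$, one has $\beta=\tfrac{s}{2(s-2)}$ and $\tfrac{1}{5\cdot2^{d-3}}=\tfrac{4}{s}$, so that $\beta\left(2-\tfrac{1}{5\cdot2^{d-3}}\right)=1$ identically. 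Hence $|A|^{4d}/p\ll|A|^{4d-2+1/(5\cdot2^{d-3})}$, i.e. the main term already sits exactly at the target level, which explains the shape of the exponent in the hypothesis.

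It then remains to control the incidence term. Here I would substitute the decomposition bounds for $E^{+}(B)$ and $E^{\times}(C)$ into the two estimates and check that the product of the prefactor with the Balog--Wooley saving does not exceed the main term $|A|^{4d-2+1/(5\cdot2^{d-3})}$ for every $d\ge 2$; the losses $d^4(\log|A|)^4$ are collected from the dyadic decompositions inside Lemmas \ref{thm-long} and \ref{thm-longx} and from the logarithmic and dimensional factors in the decomposition theorem of \cite{RR}. Taking the maximum over the two pieces $B$ and $C$ then yields the claimed bound.

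The main obstacle I anticipate is the exponent bookkeeping in this last step: one must confirm that the saving furnished by \cite{RR} for $E^{+}(B)$ and $E^{\times}(C)$ is quantitatively sufficient, after multiplication by the (dimension-dependent) prefactors coming out of the point--plane incidence bound, to keep the incidence term below the main term \emph{uniformly} in $d$, and that the range $|A|\ll p^{1/2+1/(5\cdot2^{d-1}-2)}$ is compatible with the hypotheses under which both the energy estimates and the decomposition of \cite{RR} remain valid. A secondary technical point is to ensure that the nested applications accumulate no more than $(\log|A|)^4$ logarithmic factors and $d^4$ in the dimensional constant, rather than blowing up as $d$ grows.
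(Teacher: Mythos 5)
You have assembled the right ingredients --- the Rudnev--Shkredov--Stevens decomposition (Theorem \ref{big}) combined with the recursive energy bounds of Lemmas \ref{thm-long} and \ref{thm-longx} is exactly the paper's route, and your calibration check that the main term $|A|^{4d}/p$ sits precisely at the level $|A|^{4d-2+\frac{1}{5\cdot 2^{d-3}}}$ under the hypothesis on $|A|$ is correct. However, your ``schematic form'' $E_d\big((X-X)^2\big)\ll |X|^{4d}/p+(\text{prefactor})\cdot E^{+}(X)$ is not what Lemma \ref{thm-long} provides, and no bound that is \emph{linear} in $E^{+}(X)$ of this shape can produce the stated exponent. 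The lemma is a recursion, $E_d\ll d^2(\log|A|)^2\big(|A|^{4d}/p+|A|^{2d+1}\sqrt{E_{d-1}}\big)$, and the base energies enter only at the very bottom, via $E_1\big((B-B)^2\big)\ll E^{+}(B)$ and $E_1(C\cdot C)=E^{\times}(C)$. Unrolling the recursion, the dependence on the base energy is $E^{+}(B)^{1/2^{d-1}}$ (times powers of $|A|$ and the log/dimension losses), not linear: writing $E_k\ll |A|^{4k-2+\epsilon_k}$, each square root halves the excess, $\epsilon_k=\epsilon_{k-1}/2$, and the Balog--Wooley input $E^{+}(B)\lesssim |A|^{14/5}=|A|^{4\cdot 1-2+\frac{4}{5}}$ gives $\epsilon_1=4/5$, whence $\epsilon_d=\frac{4}{5\cdot 2^{d-1}}=\frac{1}{5\cdot 2^{d-3}}$. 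This halving computation is the entire content of the exponent in Theorem \ref{last}, and it is exactly the ``exponent bookkeeping'' you defer as an anticipated obstacle; carried out with your linear schematic form, the step would fail.

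The paper executes this as an induction on $d$ (Corollaries \ref{hp1} and \ref{hp2}): the base case $d=2$ feeds $E_1\big((B-B)^2\big)\ll E^{+}(B)\lesssim|A|^{14/5}$ directly into Lemma \ref{thm-long}, and the inductive step must re-absorb the main term $|A|^{4k}/p$ at every level $k\le d$, which works because the hypothesis $|A|\ll p^{\frac{1}{2}+\frac{1}{5\cdot 2^{d-1}-2}}$ at level $d$ is the most restrictive (the exponent decreases in $d$) and is compatible with the requirement $|A|\le p^{5/8}$ of Theorem \ref{big}, since $\frac{1}{2}+\frac{1}{5\cdot 2^{d-1}-2}\le \frac58$ for $d\ge 2$. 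A smaller point you gloss over: the corollaries state all bounds in terms of $|A|$ rather than $|B|$ or $|C|$ (using $|B|,|C|\le|A|$), which is what allows the two pieces to be compared under a single maximum. To repair your write-up, replace the schematic linear form with the actual recursion and run the induction; the rest of your plan then goes through as in the paper.
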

\section{Preliminaries}
Let $E$ and $F$ be multi-sets in $\mathbb{F}_p^2$. We denote by $\overline{E}$ and $\overline{F}$ the sets of distinct elements in $E$ and $F$, respectively. For any multi-set $X$, we use the notation $|X|$ to denote the size of $X$. For $\lambda\in \mathbb{F}_p$, let $N(E, F, \lambda)$ be the number of pairs $\left((e_1, e_2), (f_1, f_2)\right)\in E\times F$ such that $e_1f_1+e_2+f_2=\lambda$. In the following lemma, we provide an upper bound and a lower bound of $N(E, F, \lambda)$ for any $\lambda\in \mathbb{F}_p$. Note that, this lemma is essentially the weighted version of the second listed author point-line incidences \cite{vinh-incidence} in the plane $\mathbb{F}_q^2$ (see also \cite[Lemma 14]{HLR}).

\begin{lemma}\label{fourier}
Let $E, F$ be multi-sets in $\mathbb{F}_p^2$. For any $\lambda\in \mathbb{F}_p$, we have
\[\left\vert N(E, F, \lambda)-\frac{|E||F|}{p}\right\vert \le p^{\frac{1}{2}}\left(\sum_{(e_1, e_2)\in \overline{E}}m_E((e_1, e_2))^2\sum_{(f_1, f_2)\in \overline{F}}m_F((f_1, f_2))^2\right)^{1/2},\]
where $m_X((a, b))$ is the multiplicity of $(a, b)$ in $X$ with $X\in \{E, F\}$. 
\end{lemma}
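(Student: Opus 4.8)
The plan is to prove this by harmonic analysis over $\mathbb{F}_p$: extract the main term $|E||F|/p$ from the trivial additive character, and then exhibit square-root cancellation in the remaining weighted bilinear character sums. This is the analytic counterpart of bounding the second singular value of the point--line incidence graph that underlies $N(E,F,\lambda)$, the spectral gap $\sqrt p$ of that graph being exactly the factor $p^{1/2}$ in the statement.

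First I would fix a nontrivial additive character $\chi(t)=e^{2\pi i t/p}$ and apply orthogonality, $\tfrac1p\sum_{s\in\mathbb{F}_p}\chi(sx)=\mathbf 1[x=0]$, to the constraint $e_1f_1+e_2+f_2-\lambda=0$, rewriting $N$ over distinct points weighted by multiplicities:
\[N(E,F,\lambda)=\frac1p\sum_{s\in\mathbb{F}_p}\chi(-s\lambda)\,T_s,\qquad T_s=\sum_{(e_1,e_2)\in\overline E}\sum_{(f_1,f_2)\in\overline F}m_E((e_1,e_2))\,m_F((f_1,f_2))\,\chi\!\big(s(e_1f_1+e_2+f_2)\big).\]
The term $s=0$ gives exactly $|E||F|/p$ since $\sum_{\overline E}m_E=|E|$ and $\sum_{\overline F}m_F=|F|$, so it remains to bound $\tfrac1p\sum_{s\ne 0}|T_s|$.

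The crux is the second step, where the field structure enters. Grouping the second coordinates into partial sums $\alpha_s(e_1)=\sum_{e_2}m_E((e_1,e_2))\chi(se_2)$ and $\beta_s(f_1)=\sum_{f_2}m_F((f_1,f_2))\chi(sf_2)$, we obtain $T_s=\sum_{e_1,f_1}\alpha_s(e_1)\beta_s(f_1)\chi(se_1f_1)$. Applying Cauchy--Schwarz in $e_1$, the inner factor becomes $\sum_{e_1}\big|\sum_{f_1}\beta_s(f_1)\chi(se_1f_1)\big|^2$; for $s\neq 0$ the map $e_1\mapsto se_1$ is a bijection of $\mathbb{F}_p$, so Plancherel converts this sum into exactly $p\sum_{f_1}|\beta_s(f_1)|^2$. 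This yields
\[|T_s|\le p^{1/2}\Big(\sum_{e_1}|\alpha_s(e_1)|^2\Big)^{1/2}\Big(\sum_{f_1}|\beta_s(f_1)|^2\Big)^{1/2}.\]
I expect this to be the one delicate point: the gain is a single factor $p^{1/2}$ rather than $p$, and it hinges on the invertibility of nonzero $s$; a careless Cauchy--Schwarz here destroys the cancellation and returns only the trivial bound.

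Finally I would sum over $s\neq 0$, apply Cauchy--Schwarz in $s$, and use Plancherel once more in the second coordinate, via $\sum_{s}|\alpha_s(e_1)|^2=p\sum_{e_2}m_E((e_1,e_2))^2$ and the analogous identity for $\beta_s$, to return to the multiplicities:
\[\sum_{s\ne 0}|T_s|\le p^{1/2}\Big(\sum_{s}\sum_{e_1}|\alpha_s(e_1)|^2\Big)^{1/2}\Big(\sum_{s}\sum_{f_1}|\beta_s(f_1)|^2\Big)^{1/2}\le p^{3/2}\Big(\sum_{(e_1,e_2)\in\overline E}m_E((e_1,e_2))^2\Big)^{1/2}\Big(\sum_{(f_1,f_2)\in\overline F}m_F((f_1,f_2))^2\Big)^{1/2}.\]
Dividing by $p$ and recalling $|N-|E||F|/p|\le\tfrac1p\sum_{s\ne0}|T_s|$ gives the asserted bound with the exact constant $p^{1/2}$. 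Everything after the bilinear estimate is routine bookkeeping with Cauchy--Schwarz and Plancherel, so the whole difficulty is concentrated in the square-root cancellation of $T_s$.
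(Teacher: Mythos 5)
Your proof is correct and achieves the stated bound with the exact constant $p^{1/2}$: the orthogonality step extracting $|E||F|/p$ from $s=0$, the per-frequency bilinear estimate $|T_s|\le p^{1/2}\left(\sum_{e_1}|\alpha_s(e_1)|^2\right)^{1/2}\left(\sum_{f_1}|\beta_s(f_1)|^2\right)^{1/2}$ (valid for $s\ne 0$ since $e_1\mapsto se_1$ is a bijection, after completing the $e_1$-sum to all of $\mathbb{F}_p$, which only increases it), and the final Cauchy--Schwarz in $s$ combined with $\sum_{s\in\mathbb{F}_p}|\alpha_s(e_1)|^2=p\sum_{e_2}m_E((e_1,e_2))^2$ all check out; extending the $s$-sum from $s\ne 0$ to all $s$ is harmless as it adds nonnegative terms. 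The overall skeleton (character orthogonality for the main term, then Cauchy--Schwarz with completion of sums) matches the paper, but your treatment of the error term is organized genuinely differently. The paper applies a single Cauchy--Schwarz in the full point variable $(e_1,e_2)\in\overline{E}$, completes to $(e_1,e_2)\in\mathbb{F}_p^2$ so that orthogonality forces $s=s'$ and $f_1=f_1'$ simultaneously, and then splits the surviving sum into a diagonal part $I$ (with $f_2=f_2'$, each term contributing $p-1$) and an off-diagonal part $II$, which it discards via the sign argument $II<0$, $|II|\le I$ since $L^2\ge 0$. You instead factor through the partial Fourier transforms $\alpha_s,\beta_s$ in the second coordinate, isolate the square-root cancellation in a clean operator-norm bound for each fixed $s$, and then recombine over $s$ with a second Cauchy--Schwarz plus Plancherel. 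Your route is more modular: the single delicate inequality is the bilinear bound, no sign analysis of off-diagonal terms is needed because Plancherel handles the diagonal exactly, and the $p^{1/2}$ is visibly the spectral bound for the kernel $\chi(se_1f_1)$. The paper's one-shot computation processes all pairs $(s,s')$ at once and is shorter on the page, at the cost of a less transparent diagonal/off-diagonal bookkeeping. Both arguments are lossless and yield the identical factor $p^{1/2}$.
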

\begin{proof}
Let $\chi$ be a non-trivial additive character on $\mathbb{F}_p$. We have 
$$ N(E, F, \lambda)=\sum_{(e_1, e_2)\in \overline{E}, (f_1, f_2) \in\overline{F}} \frac{1}{p} m_E((e_1, e_2))m_F((f_1, f_2))\sum_{s \in {\Bbb F}_p} \chi(s\cdot (e_1f_1+e_2+f_2-\lambda)).$$  This gives us
$$N(E, F, \lambda)=\frac{|E||F|}{p}+L, $$ where
$$ L=\sum_{(e_1, e_2)\in \overline{E}, (f_1, f_2) \in \overline{F}}m_E((e_1, e_2))m_F((f_1, f_2)) \frac{1}{p} \sum_{s \not=0} \chi(s\cdot (e_1f_1+e_2+f_2-\lambda)).$$
If we view  $L$ as a sum in $(e_1, e_2)\in \overline{E}$, then we can apply the Cauchy-Schwarz inequality to derive the following:
\begin{align*}
L^2 &\leq \sum_{(e_1, e_2)\in \overline{E}}m_E((e_1, e_2))^2 \sum_{(e_1, e_2) \in  \mathbb{F}_p^{2}} \frac{1}{p^2} \sum_{s,s' \not=0}
\sum_{(f_1, f_2), (f_1', f_2')\in \overline{F}}m_F((f_1, f_2))m_F((f_1', f_2'))
\\&\cdot \chi(s\cdot (e_1f_1+e_2+f_2-\lambda)) \chi(s'\cdot(-e_1f_1'-e_2-f_2'+\lambda))\\
&=\sum_{(e_1, e_2)\in\overline{E}}m_E((e_1, e_2))^2 \frac{1}{p^2} \sum_{\substack{(e_1, e_2)\in \mathbb{F}_p^{2} \\ (f_1, f_2)\in \overline{F}\\ (f_1', f_2')\in \overline{F}\\ s, s'\ne 0}}m_F((f_1, f_2))m_F((f_1', f_2'))
\chi(e_1(sf_1-s'f_1'))\chi(e_2(s-s'))\\ &\cdot\chi(s(f_2-\lambda)-s'(f_2'-\lambda))\\
\end{align*}
\begin{align*}
&=\sum_{(e_1, e_2)\in \overline{E}}m_E((e_1, e_2))^2\sum_{\substack{s\ne 0\\(f_1, f_2)\in \overline{F}\\(f_1', f_2')\in \overline{F}\\ f_1=f_1'}}m_F((f_1, f_2))m_F((f_1', f_2'))\chi(s\cdot (f_2-f_2'))=I+II,
\end{align*}
where $I$ is the sum over all pairs $\left((f_1, f_2), (f_1, f_2')\right)$ with $f_2=f_2'$, and $II$ is the sum over all pairs $\left((f_1, f_2), (f_1', f_2')\right)$ with $f_2\ne f_2'$. 

It is not hard to check that if $f_2\ne f_2'$, then 
\[\sum_{s\ne 0}\chi(s\cdot (f_2-f_2'))=-1,\]
so $II<0$. Note that $|II|\le I$ since $L^2\ge 0$. 

On the other hand,  if $f_2=f_2'$, then 
\[\sum_{s\ne 0}\chi(s\cdot (f_2-f_2'))=p-1.\]
In other words, 

\[I\le p \sum_{(e_1, e_2)\in \overline{E}}m_E((e_1, e_2))^2\sum_{(f_1, f_2)\in \overline{F}}m_F((f_1, f_2))^2,\]
which implies that 
\[|L| \le \sqrt{I +II} \le p^{\frac{1}{2}}\left(\sum_{(e_1, e_2)\in \overline{E}}m_E((e_1, e_2))^2\sum_{(f_1, f_2)\in \overline{F}}m_F((f_1, f_2))^2\right)^{1/2}.\]
This completes the proof of the lemma.
\end{proof}
\bigskip
For $A\subset\mathbb{F}_p$, let $E_d\left((A-A)^2\right)$ be the number of $4d$-tuples $\{(a_i, b_i, c_i, e_i)\}_{i=1}^d$ with $a_i, c_i, b_i, e_i\in A$
such that 
\[(a_1-b_1)^2+\cdots+(a_d-b_d)^2=(c_1-e_1)^2+\cdots+(c_d-e_d)^2.\]

Similarly, let $E_d\left(A\cdot A\right)$ be the number of $4d$-tuples $\{(a_i, b_i, c_i, e_i)\}_{i=1}^d$ with $a_i, c_i, b_i, e_i\in A$
such that 
\[a_1b_1+\cdots+a_db_d=c_1e_1+\cdots+c_de_d.\]

In our next lemmas, we give recursive formulas for $E_d((A-A)^2)$ and $E_d(A\cdot A)$.
\bigskip
\begin{lemma}\label{thm-long}
For $A\subset \mathbb{F}_p$, we have
\[E_d\left((A-A)^2\right)\le C d^2(\log |A|)^2\left( \frac{|A|^{4d}}{p}+|A|^{2d+1}\sqrt{E_{d-1}\left((A-A)^2\right)}\right),\]
for some positive constant $C$.
\end{lemma}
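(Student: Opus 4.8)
The plan is to build a recursion by isolating the last coordinate and feeding the resulting count into the point-line incidence bound of Lemma \ref{fourier}. First I would write $E_d((A-A)^2)=\sum_{\lambda\in\mathbb{F}_p}r_d(\lambda)^2$, where $r_d(\lambda)=\#\{(\mathbf{a},\mathbf{b})\in A^d\times A^d:\sum_i(a_i-b_i)^2=\lambda\}$, and peel off the $d$-th coordinate through the convolution identity $r_d=r_{d-1}*\nu$ with $\nu(z)=\#\{(a,b)\in A^2:(a-b)^2=z\}$. Expanding the squared $L^2$-norm and setting $w_1=a_d-b_d$, $w_2=c_d-e_d$ gives the clean identity $E_d=\sum_{w_1,w_2}r_A(w_1)r_A(w_2)\,N_{d-1}(w_2^2-w_1^2)$, where $r_A(w)=\#\{(a,b)\in A^2:a-b=w\}$ and $N_{d-1}(\Lambda)$ counts $(d-1)$-dimensional configurations whose signed distance-difference $\Lambda_{ab}-\Lambda_{ce}$ equals $\Lambda$. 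The main term $|A|^{4d}/p$ is extracted by replacing $N_{d-1}$ by its average $p^{-1}|A|^{4(d-1)}$, using $\sum_z N_{d-1}(z)=|A|^{4(d-1)}$ and $\sum_w r_A(w)=|A|^2$.

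The heart of the argument is to control the fluctuation $E_d-|A|^{4d}/p$ via Lemma \ref{fourier}. To expose the bilinear form that the incidence bound needs, I would factor $w_2^2-w_1^2=(w_2-w_1)(w_2+w_1)$ and substitute $u=w_2-w_1$, $v=w_2+w_1$, turning the constraint into the product relation $uv=\Lambda$. Splitting $\Lambda=\Lambda_{ab}-\Lambda_{ce}$ recasts the defining relation of $E_d$ exactly in the shape $uv+\Lambda_{ce}-\Lambda_{ab}=0$ handled by Lemma \ref{fourier} (with $e_1=u$, $f_1=v$, $e_2=\Lambda_{ce}$, $f_2=-\Lambda_{ab}$, $\lambda=0$). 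I would then build the multisets $E$ and $F$ from the two $(d-1)$-dimensional representations together with the last-coordinate data, arranged so that the incidence main term $|E||F|/p$ reproduces $|A|^{4d}/p$ while one of the second-moment factors $\sum m^2$ collapses to $E_{d-1}((A-A)^2)$; the incidence error $p^{1/2}\big(\sum_E m_E^2\sum_F m_F^2\big)^{1/2}$ then produces the desired $|A|^{2d+1}\sqrt{E_{d-1}((A-A)^2)}$.

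The main obstacle is that the bilinear factors $u=w_2-w_1$ and $v=w_2+w_1$ unavoidably mix the two representations' last coordinates, so the realization weight $r_A(w_1)r_A(w_2)$ does not factor across $E$ and $F$, and the raw count is not literally of the form $N(E,F,\lambda)$. I expect to remove this coupling by a dyadic decomposition of the difference-multiplicities, partitioning the differences into $O(\log|A|)$ classes on which $r_A(w_1)\sim 2^{j}$ and $r_A(w_2)\sim 2^{k}$ are essentially constant; on each of the $O((\log|A|)^2)$ flat pieces the weight factors and Lemma \ref{fourier} applies, which is the source of the $(\log|A|)^2$ factor in the statement. The remaining delicate points are to verify that after flattening the relevant second moment is exactly $E_{d-1}((A-A)^2)$ rather than a higher-order energy, and to sum the dyadic contributions; the polynomial factor $d^2$ is then absorbed into the crude constants accumulated along the reduction.
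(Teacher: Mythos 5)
Your reduction identity and your diagnosis of the coupling problem are both correct, but the fix you propose does not work, and the choice of Lemma \ref{fourier} as the incidence input cannot deliver the stated bound in any case. First, the coupling: after the substitution $u=w_2-w_1$, $v=w_2+w_1$ and dyadic flattening of the multiplicities into classes $D_j=\{w: r_A(w)\sim 2^j\}$ and $D_k$, the admissible pairs $(u,v)$ form the set $\{(u,v)\colon v-u\in 2D_j,\ v+u\in 2D_k\}$, a linear image of $D_j\times D_k$ that is \emph{not} a Cartesian product in the $(u,v)$-coordinates. Lemma \ref{fourier} bounds $N(E,F,\lambda)$, a sum over the full product $E\times F$ of two independent multisets; it has no mechanism for imposing a joint constraint on $(e_1,f_1)$. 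Dyadic decomposition removes weight non-uniformity, which is a different obstruction from variable coupling, so your count never takes the form $N(E,F,\lambda)$; and enlarging the coupled set to the product of its projections would inflate the main term far beyond $|A|^{4d}/p$. The paper decouples in an entirely different way: it expands the relation $(a-b)^2+t_1=(c-e)^2+t_2$ and splits the \emph{individual coordinates across the two sides of the equation}, taking points $(-2a,e,t_1+a^2-e^2)$ with $a,e\in A$, $t_1\in P_{i_0}$, and planes $bX+2cY+Z=t_2-b^2+c^2$ with $b,c\in A$, $t_2\in P_{j_0}$, in $\mathbb{F}_p^3$; note also that its dyadic pigeonholing is applied to the prefix multiplicities $r_{(d-1)(A-A)^2}(t_1)$, $r_{(d-1)(A-A)^2}(t_2)$, not to $r_A$.

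Second, and more fundamentally, even if the decoupling worked, the error term of Lemma \ref{fourier} carries an intrinsic factor $p^{1/2}$ (it comes from Cauchy--Schwarz plus complete character sums and is sharp at that level of generality), whereas the error term $|A|^{2d+1}\sqrt{E_{d-1}\left((A-A)^2\right)}$ in Lemma \ref{thm-long} contains no power of $p$. For your bookkeeping to close, you would need something like $\sum_F m_F^2\le |A|^{4d+2}/p$, i.e.\ square-root cancellation in a second moment that is itself an energy of exactly the kind the recursion is trying to bound; nothing justifies assuming it. The $p$-free error is precisely what Rudnev's point--plane theorem (Theorem \ref{thm:rudnev}) supplies through the terms $|\mathcal{R}|^{1/2}|\mathcal{S}|+k|\mathcal{S}|$ (with $k=|A|$ after the collinearity check), and it is the entire source of the improvement over the exponent $(d+1)/2$: indeed the paper's remark on arbitrary finite fields states that Fourier-type energy estimates alone only reach $|A^{2d+1}|\gg q^{\frac{2d+2}{2}+\frac{1}{4d}}$. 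So the gap is genuine: the incidence input for Lemma \ref{thm-long} must be the three-dimensional point--plane bound, not the two-dimensional Fourier bound of Lemma \ref{fourier}, whose proper role is in the later step (the proofs of Theorems \ref{thm1} and \ref{thm2}).
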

The proof of this lemma will be given in the next section. The following result is a direct consequence, which tells us an upper bound of $E_d((A-A)^2)$.
\bigskip
\begin{corollary}\label{cor-hm1} Let $A$ be a set in $\mathbb{F}_p$. For $d\ge 2$, suppose that $|A|\gg (d \log|A|) p^{1/2}$, then we have 
\[E_d\left((A-A)^2\right)\ll d^2 (\log|A|)^2 \frac{|A|^{4d}}{p}+d^4 (\log|A|)^4 |A|^{4d-2+\frac{1}{2^{d-1}}}.\]
\end{corollary}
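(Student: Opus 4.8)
The plan is to unwind the recursion of Lemma \ref{thm-long} by induction on $d$, but \emph{carrying the two summands through the recursion separately} rather than collapsing them. First I would record the base case $d=1$. Since $(a-b)^2=(c-e)^2$ holds exactly when $a-b=\pm(c-e)$, the quantity $E_1((A-A)^2)$ is at most twice the additive energy of $A$, so $E_1((A-A)^2)\le 2|A|^3$. Observing that $|A|^3=|A|^{4\cdot 1-2+1/2^{0}}$, this bound already has the shape of the intended ``second'' term, which is the natural anchor for the induction.

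For the inductive step I would posit the two-parameter ansatz
\[E_d\left((A-A)^2\right)\le \alpha_d\,\frac{|A|^{4d}}{p}+\beta_d\,|A|^{4d-2+\frac{1}{2^{d-1}}},\]
and keep track of the coefficients $\alpha_d,\beta_d$ individually. Assuming this for $d-1$ and using $\sqrt{x+y}\le\sqrt{x}+\sqrt{y}$,
\[\sqrt{E_{d-1}\left((A-A)^2\right)}\le \sqrt{\alpha_{d-1}}\,\frac{|A|^{2d-2}}{p^{1/2}}+\sqrt{\beta_{d-1}}\,|A|^{2d-3+\frac{1}{2^{d-1}}}.\]
Multiplying by $|A|^{2d+1}$ as required by Lemma \ref{thm-long}, the second piece gives exactly $|A|^{4d-2+1/2^{d-1}}$ with coefficient $\sqrt{\beta_{d-1}}$, while the first piece produces a cross term $|A|^{4d-1}/p^{1/2}$.

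The crucial step is to absorb this cross term into the first-type term $|A|^{4d}/p$. Writing $|A|^{4d-1}/p^{1/2}=\bigl(|A|^{4d}/p\bigr)\cdot\bigl(p^{1/2}/|A|\bigr)$ and invoking the hypothesis $|A|\gg(d\log|A|)\,p^{1/2}$, the factor $p^{1/2}/|A|$ is $\ll(d\log|A|)^{-1}$; this gain is precisely what keeps the two coefficient sequences on different growth rates. Substituting back into Lemma \ref{thm-long} with $L:=Cd^2(\log|A|)^2$ yields the decoupled recursions
\[\alpha_d\ll Cd^2(\log|A|)^2+Cd(\log|A|)\sqrt{\alpha_{d-1}},\qquad \beta_d\ll Cd^2(\log|A|)^2\sqrt{\beta_{d-1}},\]
with base values $\alpha_1=0$, $\beta_1=2$.

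Finally I would close the two recursions. For $\beta$ the ansatz $\beta_d\le M d^4(\log|A|)^4$ is self-consistent once $M\ge C^2$, since $Cd^2(\log|A|)^2\cdot\sqrt{M}(d-1)^2(\log|A|)^2\le C\sqrt{M}\,d^4(\log|A|)^4$. For $\alpha$ the gain factor is what forces the slower growth: the ansatz $\alpha_d\le K d^2(\log|A|)^2$ closes for $K$ a large enough absolute constant, because $Cd(\log|A|)\cdot\sqrt{K}(d-1)(\log|A|)\le C\sqrt{K}\,d^2(\log|A|)^2$, reducing to the solvable inequality $K\ge C+C\sqrt{K}$. Together these give $\alpha_d\ll d^2(\log|A|)^2$ and $\beta_d\ll d^4(\log|A|)^4$, which is exactly the claimed bound. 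I expect the only real obstacle to be bookkeeping: one must apply the gain $(d\log|A|)^{-1}$ to precisely the term arising from $\sqrt{P_{d-1}}$ so that $\alpha_d$ does not inherit the larger $d^4(\log|A|)^4$ growth of $\beta_d$, and must verify that both closure inequalities hold under a single choice of absolute constants valid for every $d\ge 2$.
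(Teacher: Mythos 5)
Your proposal is correct and follows essentially the same route as the paper: induction on $d$ via Lemma \ref{thm-long}, splitting $\sqrt{E_{d-1}}$ with $\sqrt{x+y}\le\sqrt{x}+\sqrt{y}$, and absorbing the cross term $|A|^{4d-1}/p^{1/2}$ into $|A|^{4d}/p$ using the hypothesis $|A|\gg (d\log|A|)p^{1/2}$ --- the paper simply plugs in the closed forms $\alpha_d=2C^2d^2(\log|A|)^2$ and $\beta_d=2C^2d^4(\log|A|)^4$ directly rather than first writing the decoupled coefficient recursions, which is only a difference in bookkeeping. Your explicit two-parameter ansatz and the closure inequalities $K\ge C+C\sqrt{K}$, $M\ge C^2$ check out, so the argument is complete.
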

\begin{proof}
We prove by induction on $d$ that  
\[E_d\left((A-A)^2\right)\le 2C^2d^2 (\log|A|)^2 \frac{|A|^{4d}}{p}+2C^2d^4 (\log|A|)^4 |A|^{4d-2+\frac{1}{2^{d-1}}},\]
whenever $|A|\ge \sqrt{2}C (d\log |A|)p^{1/2}$, where the constant $C$ comes from Lemma \ref{thm-long}. 

The base case $d=2$ follows directly from Lemma \ref{thm-long} by using the trivial upper bound $|A|^3$ of  $E_1((A-A)^2)$. 
 
Suppose the statement holds for any $d-1\ge 2$, we now prove that it also holds for $d$. Indeed, by induction hypothesis, we have 
\begin{align}\label{eq1}E_{d-1}\left((A-A)^2\right)&\le 2C^2(d-1)^2(\log|A|)^2 \frac{|A|^{4(d-1)}}{p}+ 2C^2(d-1)^4 (\log|A|)^4 |A|^{4d-6+\frac{1}{2^{d-2}}}\\
&\le 2C^2d^2(\log|A|)^2 \frac{|A|^{4(d-1)}}{p}+ 2C^2d^4 (\log|A|)^4 |A|^{4d-6+\frac{1}{2^{d-2}}}.\nonumber
\end{align}
On the other hand, it follows from Lemma \ref{thm-long} that 
\begin{equation}\label{eq2}E_d\left((A-A)^2\right)\le Cd^2(\log|A|)^2 \left( \frac{|A|^{4d}}{p}+|A|^{2d+1}\sqrt{E_{d-1}\left((A-A)^2\right)}\right).\end{equation}
Putting (\ref{eq1}) and (\ref{eq2}) together, we obtain 
\begin{eqnarray*}
E_d\left((A-A)^2\right) &\le&  Cd^2(\log|A|)^2\frac{|A|^{4d}}{p} \\ 
&  & + \sqrt{2}C^{2}d^2(\log|A|)^2 |A|^{2d+1} \left( d \log|A|\frac{|A|^{2(d-1)}}{p^{1/2}} + d^2(\log|A|)^2|A|^{2d-3 + \frac{1}{2^{d-1}}} \right).
\end{eqnarray*}
Since $ \sqrt{2}C(d \log|A|) p^{1/2} \le |A|$, we have
\[ \sqrt{2}C^{2}d \log|A|\frac{|A|^{2(d-1)}}{p^{1/2}} \le  C\frac{|A|^{2d-1}}{p}. \]
This implies that
\[E_d\left((A-A)^2\right)\le 2Cd^2 (\log|A|)^2 \frac{|A|^{4d}}{p}+2C^{2}d^4 (\log|A|)^4 |A|^{4d-2+\frac{1}{2^{d-1}}},\]
completing the proof of the corollary.
\end{proof}
Similarly, for the case of product sets, we have
\bigskip
\begin{lemma}\label{thm-longx}
For $A\subset \mathbb{F}_p$, we have
\[E_d\left(A\cdot A\right)\le Cd^2(\log |A|)^2\left( \frac{|A|^{4d}}{p}+|A|^{2d+1}\sqrt{E_{d-1}\left(A\cdot A\right)} \right),\]
for some positive constant $C$.
\end{lemma}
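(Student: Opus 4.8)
The plan is to prove this as the bilinear analogue of Lemma~\ref{thm-long}, with the product $ab$ playing the role of the quadratic form $(a-b)^2$; the bilinear case is in fact slightly cleaner, since expanding $(a-b)^2=a^2-2ab+b^2$ produces extra square terms that do not occur here. Set
\[
r(\mu):=\big|\{(a_i,b_i)_{i=1}^{d-1}\in A^{2(d-1)}:\ a_1b_1+\cdots+a_{d-1}b_{d-1}=\mu\}\big|,
\]
so that $\sum_\mu r(\mu)=|A|^{2(d-1)}$ and $\sum_\mu r(\mu)^2=E_{d-1}(A\cdot A)$. First I would peel off the last coordinate: sorting the tuples counted by $E_d(A\cdot A)$ by the partial sums $\mu=\sum_{i<d}a_ib_i$ and $\nu=\sum_{i<d}c_ie_i$, the relation $\sum_{i\le d}a_ib_i=\sum_{i\le d}c_ie_i$ turns into $a_db_d-c_de_d=\nu-\mu$, and therefore
\[
E_d(A\cdot A)=\sum_{\delta\in\mathbb{F}_p}h(\delta)\,G(\delta),
\]
where $h(\delta):=|\{(a,b,c,e)\in A^4:\ ab-ce=\delta\}|$ and $G(\delta):=\sum_\mu r(\mu)r(\mu+\delta)$ is the autocorrelation of $r$. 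Thus $G(0)=E_{d-1}(A\cdot A)$, $G(\delta)\le G(0)$ for all $\delta$, and $\sum_\delta G(\delta)=|A|^{4(d-1)}$. Writing $h(\delta)=|A|^4/p+(h(\delta)-|A|^4/p)$ isolates the main term $\frac{|A|^4}{p}\sum_\delta G(\delta)=\frac{|A|^{4d}}{p}$ and reduces the lemma to bounding $\sum_\delta\big(h(\delta)-|A|^4/p\big)G(\delta)$.

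For that sum I would estimate the fluctuation of $h$ via Lemma~\ref{fourier}. For fixed $(b,e)$ the equation $ab-ce=\delta$ describes a line in the $(a,c)$-plane, so $h(\delta)$ is a weighted point--line incidence count between $A\times A$ and the lines indexed by $(b,e)\in A\times A$; for $\delta\neq0$ these lines are distinct, and Lemma~\ref{fourier} gives $\big|h(\delta)-|A|^4/p\big|\ll p^{1/2}|A|^{2}$ uniformly. To upgrade this to the \emph{square-root} dependence $\sqrt{E_{d-1}(A\cdot A)}=\sqrt{G(0)}$ demanded by the statement---rather than the linear dependence a crude estimate produces---I would dyadically decompose $r$ (equivalently the weight $G$) into $O(d\log|A|)$ level sets according to the size of the multiplicities, run the incidence estimate on each level, and reassemble by Cauchy--Schwarz; the two decompositions attached to the two copies of $r$ inside $G$ are what produce the factor $d^2(\log|A|)^2$. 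The diagonal term $\delta=0$ equals $\big(h(0)-|A|^4/p\big)E_{d-1}(A\cdot A)$ with $h(0)=E_1(A\cdot A)\ll|A|^3$, and since $E_{d-1}(A\cdot A)\le|A|^{4d-5}$ this is already $\ll|A|^{2d+1}\sqrt{E_{d-1}(A\cdot A)}$, hence harmless.

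I expect the main difficulty to be exactly the extraction of the \emph{sublinear} factor $\sqrt{E_{d-1}(A\cdot A)}$. Since $E_d(A\cdot A)\ge h(0)G(0)\ge|A|^2E_{d-1}(A\cdot A)$, the claimed bound is consistent only because $E_{d-1}(A\cdot A)$ is itself bounded, and any estimate of the off-diagonal sum of the shape $\max_{\delta\neq0}\big|h(\delta)-|A|^4/p\big|\cdot\sum_\delta G(\delta)$ is far too wasteful, as it trades the $\ell^2$ mass $\sum_\delta G(\delta)^2$ for the much larger $\ell^1$ mass $\sum_\delta G(\delta)=|A|^{4(d-1)}$. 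The saving has to come instead from pairing the incidence-controlled fluctuation of $h$ against the $\ell^2$ concentration of $G$ level by level, so that only the dominant dyadic level contributes and it contributes $\sqrt{E_{d-1}(A\cdot A)}$; it is precisely here that the square-root gain $p^{1/2}$ (in place of $p$) in Lemma~\ref{fourier} is indispensable, and making this balancing uniform in $d$ is the delicate point.
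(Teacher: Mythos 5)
Your reduction is set up correctly, and in fact mirrors the paper's framework: the identity $E_d(A\cdot A)=\sum_\delta h(\delta)G(\delta)$ is the paper's starting identity in disguise, the main term $|A|^{4d}/p$ is extracted the same way, the double dyadic decomposition producing $d^2(\log|A|)^2$ is exactly the paper's pigeonholing, and you correctly isolate the crux: getting $\sqrt{E_{d-1}(A\cdot A)}$ instead of a linear factor. But the engine you propose for that step cannot deliver it. The target term $|A|^{2d+1}\sqrt{E_{d-1}(A\cdot A)}$ carries \emph{no} power of $p$ and is \emph{asymmetric}: after pigeonholing to dominant levels $P_{i_0},P_{j_0}$, it must come from a count in which $P_{i_0}$ enters with exponent $1/2$, $P_{j_0}$ with exponent $1$, and no $p^{1/2}$ multiplier appears, since one needs $2^{i_0}|P_{i_0}|^{1/2}\le\sqrt{E_{d-1}}$ paired with $2^{j_0}|P_{j_0}|\le|A|^{2d-2}$. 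Lemma \ref{fourier} is structurally incapable of this: its error $p^{1/2}\bigl(\sum m_E^2\sum m_F^2\bigr)^{1/2}$ is symmetric in the two multisets and always carries $p^{1/2}$; worse, to encode $ab-ce=\nu-\mu$ in the form $e_1f_1+e_2+f_2=\lambda$ you must bury a trilinear quantity such as $-ce-\nu$ in one coordinate, and the resulting second moment is a mixed energy (solutions of $ce+\nu=c'e'+\nu'$) that is not controlled by $E_{d-1}$. However you reassemble by Cauchy--Schwarz, you end up with terms like $p^{1/2}|A|^2E_{d-1}$ (linear in the energy) or with spurious $p^{1/4}$-type losses; this is essentially the content of Remark 1.1 of the paper, which says the purely Fourier route gives strictly weaker thresholds. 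What the paper actually does is lift the problem to three dimensions: the level-restricted count of solutions to $ab+t_1=ce+t_2$ is the number of incidences between the points $(a,e,t_1)\in\mathbb{F}_p^3$, $t_1\in P_{i_0}$, and the planes $bX-cY+Z=t_2$, $t_2\in P_{j_0}$ (in the distance case, points $(-2a,e,t_1+a^2-e^2)$ and planes $bX+2cY+Z=t_2-b^2+c^2$), and then applies Rudnev's Theorem \ref{thm:rudnev} with $k=|A|$, whose error $|\mathcal{R}|^{1/2}|\mathcal{S}|+k|\mathcal{S}|$ has precisely the $p$-free asymmetric shape $|A|^3|P_{i_0}|^{1/2}|P_{j_0}|$ required. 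Your proposal never invokes this theorem, and this is the missing idea, not a technical detail.

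There is a decisive reason no refinement of your plan can close the gap: every tool you use --- Lemma \ref{fourier} (a character-sum/Vinh-type bound whose proof works verbatim over any finite field $\mathbb{F}_q$), dyadic decomposition, and Cauchy--Schwarz --- is field-agnostic, so a successful proof along your lines would establish the same recursion over $\mathbb{F}_q$. But the recursion is false there: take $q=s^2$ and $A=\mathbb{F}_s\subset\mathbb{F}_q$. All $d$-fold dot products lie in $\mathbb{F}_s$, so $E_d(A\cdot A)\ge s^{4d}/s=s^{4d-1}$ by Cauchy--Schwarz, while each of the $s$ attainable values of the $(d-1)$-fold form has $\sim s^{2d-3}$ representations, giving $E_{d-1}(A\cdot A)\sim s^{4d-5}$; the claimed bound would then force $E_d\lesssim s^{4d}/q+s^{2d+1}\cdot s^{(4d-5)/2}=s^{4d-2}+s^{4d-3/2}$, contradicting $E_d\ge s^{4d-1}$ for large $s$. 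So an input special to prime fields is unavoidable, and in the paper that input is Rudnev's point--plane theorem. Accordingly, your closing heuristic --- that the ``square-root gain $p^{1/2}$'' in Lemma \ref{fourier} is the indispensable ingredient --- has it backwards: what is indispensable is the total absence of a power of $p$, together with the asymmetry, in Rudnev's incidence term.
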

\begin{proof}
The proof of this lemma is almost identical with that of Lemma \ref{thm-long}, so we omit it.
\end{proof}
\begin{corollary}\label{cor-hm1x} Let $A$ be a set in $\mathbb{F}_p$. For $d\ge 2$, suppose that $|A|\gg (d\log|A|)p^{1/2}$, then we have 
\[E_d\left(A\cdot A\right)\ll d^2 (\log|A|)^2 \frac{|A|^{4d}}{p}+d^4 (\log|A|)^4 |A|^{4d-2+\frac{1}{2^{d-1}}}.\]
\end{corollary}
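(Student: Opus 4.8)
The plan is to establish Corollary~\ref{cor-hm1x} by exactly the same induction on $d$ that proves Corollary~\ref{cor-hm1}, simply replacing the quadratic energy $E_d\left((A-A)^2\right)$ by the multiplicative energy $E_d(A\cdot A)$ and invoking Lemma~\ref{thm-longx} in place of Lemma~\ref{thm-long}. Concretely, I would prove the stronger explicit bound
\[E_d\left(A\cdot A\right)\le 2C^2d^2(\log|A|)^2\frac{|A|^{4d}}{p}+2C^2d^4(\log|A|)^4|A|^{4d-2+\frac{1}{2^{d-1}}}\]
valid whenever $|A|\ge \sqrt{2}C(d\log|A|)p^{1/2}$, with $C$ the constant furnished by Lemma~\ref{thm-longx}; the statement of the corollary then follows at once. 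The crucial structural fact is that Lemma~\ref{thm-longx} is term-for-term identical to Lemma~\ref{thm-long}: the same constant $C$, the same main term $|A|^{4d}/p$, the same prefactor $|A|^{2d+1}$, and the same square-root tail $\sqrt{E_{d-1}(\cdot)}$. Consequently every inequality in the proof of Corollary~\ref{cor-hm1} carries over unchanged, and no genuinely new estimate is needed beyond the base case.

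For the base case $d=2$ I would first record the trivial bound $E_1(A\cdot A)\ll|A|^3$. Writing $E_1(A\cdot A)=\sum_{(a,b)\in A\times A}r(ab)$ with $r(v)=\#\{(c,e)\in A\times A:ce=v\}$, one has $r(v)\le|A|$ for $v\ne 0$ and $r(0)\le 2|A|$, hence $E_1(A\cdot A)\le 2|A|^3$. Feeding $\sqrt{E_1(A\cdot A)}\ll|A|^{3/2}$ into Lemma~\ref{thm-longx} at $d=2$ produces the term $|A|^{5}\cdot|A|^{3/2}=|A|^{13/2}$, which is precisely $|A|^{4d-2+\frac{1}{2^{d-1}}}$ at $d=2$, and the constants close up after enlarging $C$ so that $C\ge 1$.

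For the inductive step, assuming the bound for $d-1\ge 2$, I would substitute the induction hypothesis into the recursion of Lemma~\ref{thm-longx} precisely as in (\ref{eq1}) and (\ref{eq2}), split the square root via $\sqrt{x+y}\le\sqrt{x}+\sqrt{y}$, and then use the hypothesis $\sqrt{2}C(d\log|A|)p^{1/2}\le|A|$ to absorb the mixed term $\sqrt{2}C^2d\log|A|\,|A|^{2(d-1)}p^{-1/2}$ into $C|A|^{2d-1}/p$, exactly as in the quadratic case; collecting the two surviving terms reproduces the asserted bound. The main point is that there is essentially no obstacle here: all the analytic content sits inside Lemma~\ref{thm-longx} (whose proof is declared identical to that of Lemma~\ref{thm-long}), and the combinatorial induction is copied from Corollary~\ref{cor-hm1}. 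The only item that is not a literal transcription is the base-case count $E_1(A\cdot A)\ll|A|^3$, whose justification rests on multiplicativity and a little care at the value $v=0$ rather than on the identity $a-b=\pm(c-e)$ used in the quadratic setting; both, however, are elementary. Accordingly I would present the proof in a single remark, noting that it is identical to that of Corollary~\ref{cor-hm1} after replacing Lemma~\ref{thm-long} by Lemma~\ref{thm-longx} and the base estimate $E_1\left((A-A)^2\right)\ll|A|^3$ by $E_1(A\cdot A)\ll|A|^3$.
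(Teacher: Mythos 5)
Your proposal is correct and follows exactly the route the paper intends: the paper itself proves Corollary~\ref{cor-hm1x} by declaring it identical to the proof of Corollary~\ref{cor-hm1} with Lemma~\ref{thm-longx} replacing Lemma~\ref{thm-long}, which is precisely your induction, and your careful verification of the base estimate $E_1(A\cdot A)\le 2|A|^3$ (handling $v=0$ separately) is a valid, if slightly more detailed, version of the trivial bound the paper takes for granted.
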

\begin{proof}
The proof of Corollary \ref{cor-hm1x} is identical with that of Corollary \ref{cor-hm1} with Lemma \ref{thm-longx} in the place of Lemma \ref{thm-long}, thus we omit it. 
\end{proof}
\subsection{Proof of Lemma \ref{thm-long} }\label{sec1}

In the proof of Lemma \ref{thm-long}, we will use a point-plane incidence bound due to Rudnev \cite{R} and an argument in \cite[Theorem $32$]{sh13}. 

Let us first recall that if $\RR$ is a set of points in $\mathbb{F}_p^3$ and $\S$ is a set of planes in $\mathbb{F}_p^3$, then the number of incidences between $\RR$ and $\S$, denoted by $\I(\RR, \S)$, is the cardinality of the set $\{(r,s)\in \RR\times \S : r\in s\}$. The following is a version of Rudnev's point-plane incidence bound, which can be found in \cite{Z}.
\bigskip
\begin{theorem}[{\bf Rudnev}, \cite{R, Z}]\label{thm:rudnev}
Let $\RR$ be a set of points in $\mathbb{F}_p^3$ and $\S$ be a set of planes in $\mathbb{F}_p^3$, with $|\RR|\leq |\S|$.
Suppose that there is no line that contains $k$ points of $\RR$ and is contained in $k$ planes of $\S$.
Then
\[ \I(\RR,\S)\ll\frac{|\RR||\S|}{p}+ |\RR|^{1/2}|\S| +k|\S|.\]
\end{theorem}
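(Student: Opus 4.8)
The plan is not to reprove this estimate from scratch but to derive it, as Rudnev does, from a single deep combinatorial input: the Guth--Katz bound on the number of rich points of a family of lines in three dimensions, transported to the prime field. Concretely, the one result I would take as a black box is the $\Fp$-analogue (established by the polynomial partitioning method) of the following statement: if $\mathcal{L}$ is a set of lines in $\mathbb{P}^3(\Fp)$ with $|\mathcal{L}|\le p^2$ and no plane and no regulus contains more than $k$ of them, then the number of points incident to at least two lines of $\mathcal{L}$ is $\ll|\mathcal{L}|^{3/2}+|\mathcal{L}|^2/p+k|\mathcal{L}|$. Everything below reduces point--plane incidences to this statement.

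First I would homogenize and work in $\mathbb{P}^3(\Fp)$, setting $M=|\RR|\le N=|\S|$. The heart of the reduction is to turn each incidence into the meeting of two lines. Writing the incidence relation between a point and a non-degenerate plane as a single bilinear equation, one can realize it as the vanishing of the Pl\"ucker pairing of two lines in an auxiliary $\mathbb{P}^3$: to each point $r$ one associates a line $L_r$ and to each plane $s$ a line $L_s$ so that $r\in s$ holds precisely when $L_r$ and $L_s$ intersect. Passing to the Klein quadric $\mathcal{K}\subset\mathbb{P}^5$, these lines become points and the relation ``$L_r$ meets $L_s$'' becomes an explicit collinearity condition on $\mathcal{K}$. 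Thus $\I(\RR,\S)$ equals the number of intersecting cross pairs in the line family $\mathcal{L}=\{L_r\}_{r\in\RR}\cup\{L_s\}_{s\in\S}$ of size $\sim N$, which is controlled by the rich points of $\mathcal{L}$.

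Next I would match the hypotheses. The assumption of Theorem \ref{thm:rudnev} --- no line carrying $k$ points of $\RR$ and lying in $k$ planes of $\S$ --- translates, under the correspondence, into the statement that no plane and no regulus of the auxiliary $\mathbb{P}^3$ contains more than $\sim k$ lines of $\mathcal{L}$, so the rich-point bound applies with parameter $k$. The naive application counts only $2$-rich points and yields $\ll N^{3/2}+N^2/p+kN$; to reach the sharper asymmetric bound I would instead run a dyadic decomposition over the richness $t$, applying the $t$-rich-point estimate to the smaller family $\{L_r\}$ of size $M$ and summing the resulting bipartite contributions against $\{L_s\}$. Using $M\le N$ throughout, this replaces $N^{3/2}$ by $M^{1/2}N$ and $N^2/p$ by $MN/p$, producing $\I(\RR,\S)\ll MN/p+M^{1/2}N+kN$, which is the claim.

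The difficulty is concentrated in two places. The genuine analytic depth lies entirely in the Guth--Katz rich-point estimate over $\Fp$: proving it needs the polynomial partitioning machinery together with the separate treatment of points lying on planes and on ruled quadrics, and in positive characteristic one must additionally produce the statistical term $|\mathcal{L}|^2/p$, which has no analogue over $\R$ and constrains the argument precisely in the range where $|\mathcal{L}|$ is comparable to $p^2$. The second, more bookkeeping-heavy obstacle is carrying the nondegeneracy parameter faithfully through the Klein correspondence and organizing the dyadic richness sum so that the asymmetry $M\le N$ is actually exploited, since this is exactly what distinguishes the sharp bound $M^{1/2}N$ from the weaker $N^{3/2}$ that the crude count would give.
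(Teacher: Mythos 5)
First, a point of orientation: the paper contains no proof of this statement at all. Theorem \ref{thm:rudnev} is imported as a black box, quoted from Rudnev \cite{R} in the formulation given by de Zeeuw \cite{Z}, and is used only as the incidence input in the proof of Lemma \ref{thm-long}. So your attempt can only be compared with the proofs in those references, not with anything internal to this paper. Measured against \cite{R, Z}, your overall route is the correct one: homogenize, pass through the Pl\"ucker/Klein correspondence so that each point--plane incidence becomes an intersecting pair of lines in an auxiliary $\mathbb{P}^3$, translate the hypothesis on a line carrying $k$ points of $\RR$ and lying in $k$ planes of $\S$ into a bound on degenerate line configurations, and invoke a Guth--Katz-type bound over $\Fp$ for the resulting line family.

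Two corrections, one of which is a genuine gap. First, the finite-field input (due to Koll\'ar, and used as the black box in \cite{Z}) is not established by polynomial partitioning --- partitioning is precisely the tool that is unavailable over $\Fp$, which is why the statistical term $|\mathcal{L}|^2/p$ and the constraint $|\mathcal{L}|\lesssim p^2$ appear; it is proved by algebraic-geometric degree and ruled-surface arguments, and only the count of intersecting pairs (2-rich points) is needed, not $t$-rich estimates for all $t$. Second, and more substantively, your mechanism for extracting the asymmetric term $|\RR|^{1/2}|\S|$ does not work as stated: a dyadic decomposition over the richness of points of the smaller family $\{L_r\}$ cannot control cross-intersections, because a single line $L_s$ may meet many lines $L_r$ at points that are only $1$-rich for $\{L_r\}$, so those intersections are invisible to any rich-point bound for $\{L_r\}$ alone. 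The device actually used in \cite{Z} is simpler: partition the larger family $\{L_s\}$ into $\lceil |\S|/|\RR|\rceil$ blocks of size at most $|\RR|$ (capped further at about $p^2$ so as to remain in the admissible range of Koll\'ar's theorem), apply the symmetric bound $\ll |\mathcal{L}|^{3/2}+k|\mathcal{L}|$ to the union of $\{L_r\}$ with each block, and sum over blocks; this is exactly what converts $|\S|^{3/2}$ into $|\RR|^{1/2}|\S|$ and produces $\I(\RR,\S)\ll |\RR||\S|/p+|\RR|^{1/2}|\S|+k|\S|$. With that substitution your outline becomes a faithful summary of the known proof; as written, the asymmetry step would fail.
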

\paragraph{Proof of Lemma \ref{thm-long}:}
We first have 
\[E_d\left((A-A)^2\right)=\sum_{t_1, t_2}r_{(d-1)(A-A)^2}(t_1)r_{(d-1)(A-A)^2}(t_2)f(t_1, t_2),\]
where $r_{(d-1)(A-A)^2}(t)$ is the number of $2(d-1)$ tuples $(a_1, \ldots, a_{d-1}, b_1, \ldots, b_{d-1})\in A^{2d-2}$ such that $(a_1-b_1)^2+\cdots+(a_{d-1}-b_{d-1})^2=t$, and $f(t_1, t_2)$ is 
the sum $\sum_{s}r_{(A-A)^2+t_1}(s)r_{(A-A)^2+t_2}(s)$. We now split the sum $E_d\left((A-A)^2\right)$ into intervals as follows. 
\[E_d\left((A-A)^2\right)\ll \sum_{i=1}^{L_1} \sum_{j=1}^{L_2}\sum_{t_1, t_2}f(t_1, t_2) r^{(i)}_{(d-1)(A-A)^2}(t_1)r^{(j)}_{(d-1)(A-A)^2}(t_2),\]
where $L _1\le \log(|A|^{2d-2}), L_2\le \log(|A|^{2d-2})$, $r_{(d-1)(A-A)^2}^{(i)}(t_1)$ is the restriction of the function $r_{(d-1)(A-A)^2}(x)$ on the set $P_i:=\{t \colon 2^i\le r_{(d-1)(A-A)^2}(t)< 2^{i+1}\}$.

Using the pigeon-hole principle two times, there exist sets $P_{i_0}$ and $P_{j_0}$ for some $i_0$ and $j_0$ such that 
\begin{eqnarray*}
E_d\left((A-A)^2\right) &\le& (2d-2)^2 (\log|A|)^2 \sum_{t_1, t_2}f(t_1, t_2) r^{(i_0)}_{(d-1)(A-A)^2}(t_1)r^{(j_0)}_{(d-1)(A-A)^2}(t_2) \\
&\ll& d^2 (\log |A|)^2 2^{i_0}2^{j_0}\sum_{t_1,t_2}f(t_1, t_2)P_{i_0}(t_1)P_{j_0}(t_2).
\end{eqnarray*}

One can check that the sum $\sum_{t_1,t_2}f(t_1, t_2)P_{i_0}(t_1)P_{j_0}(t_2)$ is equal to the number of incidences between the point set $\RR$ of points $(-2a, e, t_1+a^2-e^2)\in \mathbb{F}_p^3$ with $a\in A, e\in A, t_1\in P_{i_0}$, and the plane set $\S$ of planes in $\mathbb{F}_p^3$ defined by 
\[bX+2cY+Z=t_2-b^2+c^2,\] where $b\in A, c\in A$ and $t_2\in P_{j_0}$. Without loss of generality, we can assume that $|P_{i_0}|\le |P_{j_0}|$.

To apply Theorem \ref{thm:rudnev}, we need to bound the maximal number of collinear points in $\RR$. The projection of $\RR$ into the plane of the first two coordinates is the set $-2A\times A$, thus if a line is not vertical, then it contains at most $|A|$ points from $\RR$. If a line is vertical, then it contains at most $|P_{i_0}|$ points from $\RR$, but that line is not contained in any plane in $\S$. In other words, we can apply Theorem \ref{thm:rudnev} with $k=|A|$, and obtain the following
\begin{align*}
\sum_{t_1,t_2}f(t_1, t_2)P_{i_0}(t_1)P_{j_0}(t_2)&\ll \frac{|A|^4|P_{i_0}||P_{j_0}|}{p}+|A|^{3}|P_{i_0}|^{1/2}|P_{j_0}|+|A|^3|P_{j_0}|\\
&\ll  \frac{|A|^4|P_{i_0}||P_{j_0}|}{p}+|A|^{3}|P_{i_0}|^{1/2}|P_{j_0}|.
\end{align*}
We now fall into the following cases:

{\bf Case $1$:} If the first term dominates, we have 
\[\sum_{t_1,t_2}f(t_1, t_2)P_{i_0}(t_1)P_{j_0}(t_2)\ll \frac{|A|^4|P_{i_0}||P_{j_0}|}{p}.\]

{\bf Case $2$:} If the second term dominates, we have 
\[\sum_{t_1,t_2}f(t_1, t_2)P_{i_0}(t_1)P_{j_0}(t_2)\ll |A|^{3}|P_{i_0}|^{1/2}|P_{j_0}|.\]
Therefore, 

\begin{align*}
E_d\left((A-A)^2\right)&\ll d^2 (\log |A|)^2 2^{i_0}2^{j_0}\left(\frac{|A|^4|P_{i_0}||P_{j_0}|}{p}+|A|^{3}|P_{i_0}|^{1/2}|P_{j_0}|\right)\\
&\ll d^2 (\log |A|)^2 \left( \frac{|A|^{4d}}{p}+|A|^{2d+1}\sqrt{E_{d-1}\left((A-A)^2\right)} \right).
\end{align*}
where we have used the facts that 
\begin{itemize}
\item $2^{i_0}|P_{i_0}|^{1/2}\ll \sqrt{E_{d-1}\left((A-A)^2\right)}$, 
\item $2^{j_0}|P_{j_0}|\ll |A|^{2d-2}$,
\item $2^{i_0}|P_{i_0}|\ll |A|^{2d-2}$.
\end{itemize}
This completes the proof of the lemma.
$\hfill\square$

\section{Proof of Theorem \ref{thm1}}
\paragraph{Proof of Theorem \ref{thm1}:}
Let $\lambda$ be an arbitrary element in $\mathbb{F}_p$.  Let $E$ be the multi-set of points $(2x, x^2+(y_1-z_1)^2+\cdots+(y_d-z_d)^2)\in \mathbb{F}_p^2$ with $x, y_i, z_i\in A$, and $F$ be the multi-set of points $(-t, t^2+(u_1-v_1)^2+\cdots+(u_d-v_d)^2)\in \mathbb{F}_p^2$ with $t,  u_i, v_i\in A$.  We have $|E|=|F|=|A|^{2d+1}$.

It follows from Lemma \ref{fourier} that 
\begin{equation}\label{eq00001}\left\vert N(E, F, \lambda)-\frac{|E||F|}{p}\right\vert \le p^{\frac{1}{2}}\left(\sum_{(e_1, e_2)\in \overline{E}}m_E((e_1, e_2))^2\sum_{(f_1, f_2)\in \overline{F}}m_F((f_1, f_2))^2\right)^{1/2}.\end{equation}

We observe that if $N(E, F, \lambda)$ is equal to the number of pairs $(\mathbf{x}, \mathbf{y})\in A^{2d+1}\times A^{2d+1}$ such that $||\mathbf{x}-\mathbf{y}||=\lambda$. 

From the setting of $E$ and $F$, it is not hard to see that 

\begin{equation}\label{eq00002}\sum_{(e_1, e_2)\in \overline{E}}m_E((e_1, e_2))^2 =|A|E_d((A-A)^2),~ \sum_{(f_1, f_2)\in \overline{F}}m_F((f_1, f_2))^2 =|A|E_d((A-A)^2).\end{equation}

Putting (\ref{eq00001}) and (\ref{eq00002}) together, we have 
\begin{equation}\label{eqx1}\left\vert N(E, F, \lambda)-\frac{|A|^{4d+2}}{p}\right\vert \le p^{\frac{1}{2}}|A|E_d((A-A)^2).\end{equation}
On the other hand, Corollary \ref{cor-hm1} gives us 
\begin{equation}\label{eqx0}
E_d\left((A-A)^2\right) \ll d^2 (\log|A|)^2 \frac{|A|^{4d}}{p}+d^4 (\log|A|)^4 |A|^{4d-2+\frac{1}{2^{d-1}}}.\end{equation}
Substituting (\ref{eqx0}) into (\ref{eqx1}), we obtain $N(E, F, \lambda)\sim |A|^{4d+2}p^{-1}$ whenever
\[|A^{2d+1}|\gtrsim_d p^{\frac{2d+2}{2}-\frac{3\cdot 2^{d-2}-d-1}{3\cdot 2^{d-1}-1}}.\]
Since $\lambda$ is arbitrary in $\mathbb{F}_p$, the theorem follows.  $\hfill\square$
\section{Proofs of Theorems \ref{thm2} and \ref{tich}}
The proof of Theorem \ref{thm2} is similar to that of Theorem \ref{thm1}, but we need a higher dimensional version of Lemma \ref{fourier}. 

Let $E$ and $F$ be multi-sets in $\mathbb{F}_p^3$. For $\lambda\in \mathbb{F}_p$, let $N(E, F, \lambda)$ be the number of pairs $\left((e_1, e_2, e_3), (f_1, f_2, f_3)\right)\in E\times F$ such that $e_1f_1+e_2f_2+e_3+f_3=\lambda$. One can follow step by step the proof of Lemma \ref{fourier} to obtain the following. 
\bigskip
\begin{lemma}\label{fourier1}
Let $E, F$ be multi-sets in $\mathbb{F}_p^3$. For any $\lambda\in \mathbb{F}_p$, we have
\[\left\vert N(E, F, \lambda)-\frac{|E||F|}{p}\right\vert \le p\left(\sum_{(e_1, e_2, e_3)\in \overline{E}}m_E((e_1, e_2, e_3))^2\sum_{(f_1, f_2, f_3)\in \overline{F}}m_F((f_1, f_2, f_3))^2\right)^{1/2}.\]
\end{lemma}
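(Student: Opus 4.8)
The plan is to mirror the proof of Lemma \ref{fourier} line by line, the only substantive change being the bookkeeping of powers of $p$ coming from the extra coordinate. First I would fix a non-trivial additive character $\chi$ on $\mathbb{F}_p$ and write
\[N(E,F,\lambda)=\frac{1}{p}\sum_{(e_1,e_2,e_3)\in\overline{E},\,(f_1,f_2,f_3)\in\overline{F}} m_E m_F \sum_{s\in\mathbb{F}_p}\chi\big(s(e_1f_1+e_2f_2+e_3+f_3-\lambda)\big).\]
Isolating the $s=0$ term produces the main term $|E||F|/p$, and the remaining sum over $s\neq 0$ is an error term $L$ with $N(E,F,\lambda)=|E||F|/p+L$.

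Next I would apply the Cauchy--Schwarz inequality to $L$, regarded as a sum in $(e_1,e_2,e_3)\in\overline{E}$, pulling out the factor $\sum_{\overline{E}} m_E^2$ and enlarging the range of the second factor from $\overline{E}$ to all of $\mathbb{F}_p^3$. Expanding the square introduces a second pair of summation variables $s'\neq 0$ and $(f_1',f_2',f_3')\in\overline{F}$ together with a prefactor $p^{-2}$, and the character factors as $\chi\big(e_1(sf_1-s'f_1')\big)\chi\big(e_2(sf_2-s'f_2')\big)\chi\big(e_3(s-s')\big)$ times a term depending only on $s,s',f_3,f_3',\lambda$. Summing freely over $(e_1,e_2,e_3)\in\mathbb{F}_p^3$, orthogonality forces $sf_1=s'f_1'$, $sf_2=s'f_2'$ and $s=s'$; since $s,s'\neq 0$ these collapse to $s=s'$, $f_1=f_1'$, $f_2=f_2'$, and the three free coordinate sums each contribute a factor $p$. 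This is precisely where the present lemma departs from Lemma \ref{fourier}: the orthogonality now yields $p^3$ against the prefactor $p^{-2}$, leaving a net factor $p$, whereas in the two-dimensional case the analogous step left a net factor $1$.

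It then remains to estimate the surviving sum over $s\neq 0$ and over pairs $(f_1,f_2,f_3),(f_1,f_2,f_3')\in\overline{F}$, which I would split into a diagonal part $I$ with $f_3=f_3'$ and an off-diagonal part $II$ with $f_3\neq f_3'$. For the off-diagonal part the inner sum $\sum_{s\neq 0}\chi\big(s(f_3-f_3')\big)$ equals $-1$, so $II<0$; for the diagonal part the inner sum equals $p-1$ and the constraints force $f=f'$, reducing it to $(p-1)\sum_{\overline{F}} m_F^2$. Collecting the net factor $p$ and the factor $\sum_{\overline{E}} m_E^2$ from the previous step into $I$ and $II$, one obtains $L^2\le I+II\le I\le p^2 \sum_{\overline{E}} m_E^2\sum_{\overline{F}} m_F^2$, and taking square roots gives the claimed inequality with leading factor $p$. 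The only genuine obstacle is the careful accounting of these powers of $p$; every other manipulation is identical to the proof of Lemma \ref{fourier}, so I would simply record that correspondence rather than reproduce all the estimates in full.
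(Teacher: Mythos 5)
Your proposal is correct and follows exactly the route the paper intends: the paper gives no separate proof for this lemma, stating only that one follows the proof of Lemma \ref{fourier} step by step, and your adaptation does precisely that, with the single substantive change (orthogonality over $(e_1,e_2,e_3)\in\mathbb{F}_p^3$ yielding $p^3$ against the prefactor $p^{-2}$, hence $L^2\le p^2\sum_{\overline{E}}m_E^2\sum_{\overline{F}}m_F^2$ and the leading factor $p$ after taking square roots) accounted for correctly.
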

We are now ready to prove Theorem \ref{thm2}.

\paragraph{Proof of Theorem \ref{thm2}:}
Let $\lambda$ be an arbitrary element in $\mathbb{F}_p$.  Let $E$ be the multi-set of points $(2x_1, 2x_2,  x_1^2+x_2^2+(y_1-z_1)^2+\cdots+(y_{d-1}-z_{d-1})^2)\in \mathbb{F}_p^3$ with $x_i, y_i, z_i\in A$, and $F$ be the multi-set of points $(-t_1, -t_2,  t_1^2+t_2^2+(u_1-v_1)^2+\cdots+(u_{d-1}-v_{d-1})^2)\in \mathbb{F}_p^3$ with $t_i,  u_i, v_i\in A$.  We have $|E|=|A|^{2d}$ and $|F|=|A|^{2d}$.

It follows from Lemma \ref{fourier1} that 
\begin{equation}\label{eq0000111}\left\vert N(E, F, \lambda)-\frac{|E||F|}{p}\right\vert \le p\left(\sum_{(e_1, e_2, e_3)\in \overline{E}}m_E((e_1, e_2, _3))^2\sum_{(f_1, f_2, f_3)\in \overline{F}}m_F((f_1, f_2, f_3))^2\right)^{1/2}.\end{equation}

We observe that if $N(E, F, \lambda)$ is equal to the number of pairs $(\mathbf{x}, \mathbf{y})\in A^{2d}\times A^{2d}$ such that $||\mathbf{x}-\mathbf{y}||=\lambda$. 

From the setting of $E$ and $F$, it is not hard to see that 

\begin{equation}\label{eq0000211}\sum_{(e_1, e_2, e_3)\in \overline{E}}m_E((e_1, e_2, e_3))^2 =|A|^2E_{d-1}((A-A)^2),~ \sum_{(f_1, f_2, f_3)\in \overline{F}}m_F((f_1, f_2, f_3))^2 =|A|^2E_{d-1}((A-A)^2).\end{equation}

Putting (\ref{eq0000111}) and (\ref{eq0000211}) together, we have 
\begin{equation}\label{eqx111}\left\vert N(E, F, \lambda)-\frac{|A|^{4d}}{p}\right\vert \le p|A|^2E_{d-1}((A-A)^2).\end{equation}
On the other hand, Corollary \ref{cor-hm1} gives us 
\begin{equation}\label{eqx011}
E_{d-1}\left((A-A)^2\right) \ll d^2 (\log|A|)^2 \frac{|A|^{4d-4}}{p}+d^4 (\log|A|)^4 |A|^{4d-6+\frac{1}{2^{d-2}}}.\end{equation}
Substituting (\ref{eqx011}) into (\ref{eqx111}), we obtain $N(E, F, \lambda)\sim |A|^{4d}p^{-1}$ whenever
\[|A^{2d}|\gtrsim_d p^{\frac{2d+1}{2}-\frac{2^d-2d-1}{2^{d+1}-2}}.\]
Since $\lambda$ is arbitrary in $\mathbb{F}_p$,  the theorem follows. $\hfill\square$
\paragraph{Proof of Theorem \ref{tich}:} The proof of Theorem \ref{tich} is similar to that of Theorem \ref{thm2} with Corollary \ref{cor-hm1x} in the place of Corollary \ref{cor-hm1}. $\hfill\square$
\section{Proof of Theorem \ref{last}}
Let us first recall the prime field analogue of Balog-Wooley decomposition energy due to Rudnev, Shkredov, Stevens \cite{RR}. 
\bigskip
\begin{theorem}[\cite{RR}]\label{big}
Let $A$ be a set in $\mathbb{F}_p$ with $|A|\le p^{5/8}$. There exist two disjoint subsets $B$ and $C$ of $A$ such that $A=B\sqcup C$ and 
\[\max\{E^+(B), E^\times (C)\}\lesssim |A|^{14/5},\]
where $E^+(B)=|\{(a, b, c, d)\in B^4\colon a+b=c+d\}|$, and $E^\times (C)=|\{(a, b, c, d)\in C^4\colon ab=cd\}|$.
\end{theorem}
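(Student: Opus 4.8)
The plan is to establish Theorem~\ref{big} in two stages: first to prove a sum-product energy inequality in $\mathbb{F}_p$ which forces additive and multiplicative structure to be mutually exclusive, and then to feed that inequality into the Balog--Wooley decomposition machinery to extract the partition $A=B\sqcup C$. The whole argument is an $\mathbb{F}_p$-analogue of the real-variable low-energy decomposition, with Rudnev's point-plane incidence bound (Theorem~\ref{thm:rudnev}) playing the role that the Szemer\'edi--Trotter theorem plays over $\mathbb{R}$. Throughout I would track an additive representation function $r_{A-A}$ and a multiplicative one $r_{A/A}$, since the interaction between their level sets is what ultimately produces the exponent $14/5=3-1/5$.

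For the engine I would prove a sum-product energy estimate of the schematic form $E^+(X)\,E^\times(X)\ll |X|^{\alpha}$ (up to the statistical term of size $|X|^{8}/p$), valid for every subset $X\subseteq A$, with $\alpha$ tuned so that the subsequent decomposition outputs the exponent $14/5$. The derivation would mirror the proof of Lemma~\ref{thm-long} almost verbatim: one encodes the simultaneous additive and multiplicative relations as an incidence problem between a point set $\RR$ and a plane set $\S$ in $\mathbb{F}_p^3$, dyadically decomposes both collections according to the multiplicities of $r_{A-A}$ and $r_{A/A}$, applies the Cauchy--Schwarz inequality to reduce to a clean incidence count, and then invokes Theorem~\ref{thm:rudnev} with the collinearity parameter $k\sim|A|$. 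The hypothesis $|A|\le p^{5/8}$ is precisely what guarantees that the incidence-rich contribution $|\RR|^{1/2}|\S|+k|\S|$ dominates the statistical contribution $|\RR||\S|/p$ in Rudnev's bound, so that a genuine sum-product saving survives.

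For the decomposition I would run the iterative peeling procedure of Balog and Wooley. Starting from $B=A$ and $C=\varnothing$, at each stage I locate the subset of the current additive pile $B$ carrying the most concentrated additive energy — formally a level set of popular differences isolated by dyadic pigeonholing — and transfer it into the multiplicative pile $C$. The engine inequality guarantees that any additively rich cluster has correspondingly small multiplicative energy, so each transfer keeps $E^\times(C)$ under control while steadily deflating $E^+(B)$. Halting when $E^+(B)$ first drops below the target threshold, and balancing the accumulated contributions to $E^\times(C)$ against that same threshold, yields $\max\{E^+(B),E^\times(C)\}\ll|A|^{14/5}$ once the logarithmic losses from the dyadic bookkeeping are absorbed into the $\lesssim$ notation.

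The main obstacle, as is typical in this circle of ideas, lies in the engine rather than the decomposition: driving the exponent far enough to yield $14/5$ requires setting up the incidence configuration so that the multiplicities of $r_{A-A}$ and $r_{A/A}$ are exploited \emph{simultaneously} and optimally, and requires carefully controlling the degenerate configurations (collinear points and coaxial planes) that govern the parameter $k$ in Theorem~\ref{thm:rudnev}. A secondary difficulty is the bookkeeping of the iteration: because energy is a fourth-order, superadditive quantity, one must verify that it behaves well enough under successive removals that the total transferred multiplicative energy does not overshoot the threshold, and that both $B$ and $C$ remain large. An alternative to the greedy peeling, which may give cleaner constants, is to replace the iteration by the spectral (operator) method for energies, applying the same Rudnev-based engine estimate to the eigenvalues of the representation operator of $A$; I would keep this route in reserve in case the transparent peeling argument loses too much in the exponent.
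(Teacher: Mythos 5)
A preliminary remark: the paper does not prove this statement at all. Theorem \ref{big} is quoted verbatim from Rudnev, Shkredov, and Stevens \cite{RR} and is used as a black box in the proof of Theorem \ref{last}, so there is no in-paper argument to compare yours against; the relevant benchmark is the proof in \cite{RR} itself.

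Measured against that benchmark, your outline names the right ingredients — Rudnev's point-plane incidence bound (Theorem \ref{thm:rudnev}) as the $\mathbb{F}_p$ substitute for Szemer\'edi--Trotter, feeding a Balog--Wooley-style decomposition — but it is a plan rather than a proof, and the genuine gap sits exactly where you place the ``engine.'' You posit an inequality of the shape $E^+(X)E^\times(X)\ll |X|^{\alpha}$ for all $X\subseteq A$, ``with $\alpha$ tuned so that the subsequent decomposition outputs $14/5$''; but producing a concrete, sufficiently strong such inequality \emph{is} the entire content of \cite{RR}, so leaving $\alpha$ unspecified makes the argument circular. Your suggestion to mirror Lemma \ref{thm-long} ``almost verbatim'' also does not survive inspection: that lemma is a recursion in the dimension $d$ for the distance energy $E_d((A-A)^2)$, whereas the engine here must play the \emph{additive} energy of a one-dimensional set against its \emph{multiplicative} energy; the mechanism in \cite{RR} is asymmetric, running through third-moment energies and Shkredov's operator method (the ``spectral'' route you keep in reserve is in fact essentially the route they take), with Rudnev's bound entering to control the relevant incidence counts. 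The decomposition stage is likewise only gestured at: you correctly flag, but do not resolve, the bookkeeping issues (energy is not additive under unions — only $E^{1/4}$ is subadditive — and one must check that the peeled pieces neither inflate $E^\times(C)$ past the threshold nor degenerate in size), and the assertion that $|A|\le p^{5/8}$ is ``precisely'' what makes the non-statistical terms of Theorem \ref{thm:rudnev} dominate is stated without verification. In short: the architecture is plausible and historically accurate, but all of the quantitative content — the engine inequality with an explicit exponent, the collinearity analysis determining $k$, and the convergence of the iteration — is missing, so this does not yet constitute a proof of Theorem \ref{big}.
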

We refer the interested reader to \cite{balog} for the orginal result over $\mathbb{R}$. The most up to date bound for this result over $\mathbb{R}$ is due to Shakan \cite{shakan19}.

The following is another corollary of Lemma \ref{thm-long}. 
\bigskip
\begin{corollary}\label{hp1}
Let $A$ be a set in $\mathbb{F}_p$, and $B$ be a subset of $A$. For an integer $d\ge 2$, 
suppose that $|A|\ll p^{\frac{1}{2}+\frac{1}{5\cdot 2^{d-1}-2}}$ and $E^+(B)\lesssim |A|^{14/5}$, then we have 
\[E_d((B-B)^2)\ll d^4(\log|A|)^4 |A|^{4d-2+\frac{1}{5\cdot 2^{d-3}}}.\]
\end{corollary}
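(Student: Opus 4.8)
The plan is to prove the bound by induction on $d$, using the recursive estimate of Lemma \ref{thm-long} applied to the set $B$ and feeding in the additive-energy hypothesis through the base case $d=1$. The key observation is that $E_1((B-B)^2)$ is governed by the additive energy of $B$: since $(a-b)^2=(c-e)^2$ forces $a-b=\pm(c-e)$, one has $E_1((B-B)^2)\le 2E^+(B)$, so the hypothesis $E^+(B)\lesssim|A|^{14/5}$ gives $E_1((B-B)^2)\lesssim|A|^{14/5}$. Writing $\beta_d:=4d-2+\tfrac{1}{5\cdot 2^{d-3}}$ for the target exponent, one checks $\beta_1=14/5$, so this is exactly the claimed bound at $d=1$. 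It replaces the trivial estimate $E_1\le|A|^3$ used in Corollary \ref{cor-hm1}, and this single saving is what ultimately lowers the exponent from $\tfrac{1}{2^{d-1}}$ to $\tfrac{1}{5\cdot 2^{d-3}}$.

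Applying Lemma \ref{thm-long} to $B$ and bounding $|B|\le|A|$, $\log|B|\le\log|A|$ in the resulting inequality (valid for an upper bound), I obtain
\[E_d((B-B)^2)\le Cd^2(\log|A|)^2\left(\frac{|A|^{4d}}{p}+|A|^{2d+1}\sqrt{E_{d-1}((B-B)^2)}\right).\]
A direct computation gives the exponent recursion $\beta_d=2d+1+\tfrac12\beta_{d-1}$. Hence if the inductive hypothesis reads $E_{d-1}((B-B)^2)\ll (d-1)^4(\log|A|)^4|A|^{\beta_{d-1}}$, then $|A|^{2d+1}\sqrt{E_{d-1}((B-B)^2)}\ll (d-1)^2(\log|A|)^2|A|^{\beta_d}$, which after multiplication by the prefactor $Cd^2(\log|A|)^2$ yields the desired $\ll d^4(\log|A|)^4|A|^{\beta_d}$.

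The size hypothesis enters only to guarantee that the term $\tfrac{|A|^{4d}}{p}$ is dominated by $|A|^{\beta_d}$: this needs $|A|^{4d-\beta_d}=|A|^{2-1/(5\cdot 2^{d-3})}\ll p$, and a short manipulation shows $\tfrac{1}{2-1/(5\cdot 2^{d-3})}=\tfrac12+\tfrac{1}{5\cdot 2^{d-1}-2}$, which is precisely the stated condition. Since this exponent is decreasing in $d$, the assumption at the top level forces the same domination at every intermediate level $d'\le d$, so the induction runs uniformly under the single hypothesis. In contrast to Corollary \ref{cor-hm1}, here $|A|$ sits close to $p^{1/2}$, so the $\tfrac{|A|^{4d}}{p}$ terms are always subordinate; consequently $\sqrt{E_{d-1}((B-B)^2)}$ carries only the single term $|A|^{\beta_{d-1}/2}$ and no cross terms arise, making the recursion cleaner than in Corollary \ref{cor-hm1}.

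The one point requiring care is the bookkeeping of the constant and logarithmic factors across the $d-1$ iterations. As in Corollary \ref{cor-hm1}, I would prove the explicit statement $E_{d'}((B-B)^2)\le K\, d'^4(\log|A|)^4|A|^{\beta_{d'}}$ by induction for a fixed absolute constant $K$, observing that the factor $\sqrt{K}$ generated at each step is reabsorbed and that the logarithmic exponent obeys a relation of the form $a_{d'}=2+\tfrac12 a_{d'-1}$ with fixed point $4$, so that the base-case contribution decays geometrically and the power of $\log|A|$ stabilizes at $4$. The essential mathematical content is the identification of the base case with $E^+(B)$ and the verification that the size condition is exactly the one suppressing the $\tfrac{|A|^{4d}}{p}$ terms; the remaining steps are a routine adaptation of Corollary \ref{cor-hm1}.
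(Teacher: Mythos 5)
Your proposal is correct and takes essentially the same route as the paper: induction on $d$ via Lemma \ref{thm-long} applied to $B$ (with $|B|\le |A|$), the additive-energy hypothesis entering through $E_1((B-B)^2)\ll E^+(B)\lesssim |A|^{14/5}$, and the size condition $|A|\ll p^{\frac{1}{2}+\frac{1}{5\cdot 2^{d-1}-2}}$ serving exactly to suppress the $\frac{|A|^{4d}}{p}$ term at every level, which is also why the paper's induction closes. The only cosmetic difference is that you anchor the induction at $d=1$ (noting $\beta_1=14/5$) whereas the paper takes $d=2$ as the base case, obtained from the same identification of $E_1((B-B)^2)$ with $E^+(B)$.
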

\begin{proof}
We prove by induction on $d$ that 
\[E_d((B-B)^2)\le 4C^2d^4(\log|A|)^4 |A|^{4d-2+\frac{1}{5\cdot 2^{d-3}}},\]
whenever $|A|\le (Cp)^{\frac{1}{2}+\frac{1}{5\cdot 2^{d-1}-2}}$, where the constant $C$ comes from Lemma \ref{thm-long}. 

The base case $d=2$ follows directly from Lemma \ref{thm-long} and the facts that $E_1((B-B)^2)\ll E^+(B)$ and $|B|\le |A|$. 

Suppose the corollary holds for $d-1\ge 2$, we now show that it also holds for the case $d$. Indeed, it follows from Lemma \ref{thm-long} that 
\[E_d\left((B-B)^2\right)\le Cd^2 (\log|A|)^2\left( \frac{|B|^{4d}}{p}+|B|^{2d+1}\sqrt{E_{d-1}\left((B-B)^2\right)} \right).\]
On the other hand, by induction hypothesis, we have 
\[E_{d-1}((B-B)^2)\le 4C^2(d-1)^4(\log|A|)^4 |A|^{4d-6+\frac{1}{5\cdot 2^{d-4}}}\le 4C^2d^4(\log|A|)^4 |A|^{4d-6+\frac{1}{5\cdot 2^{d-4}}}.\]
Thus, using the fact that $|B|\le |A|$, we obtain 
\[E_d((B-B)^2)\le d^2(\log|A|)^2 \left( C\frac{|A|^{4d}}{p}+2C^2d^2(\log|A|)^2|A|^{4d-2+\frac{1}{5\cdot 2^{d-3}}}\right)\le 4C^2d^4 (\log|A|)^4 |A|^{4d-2+\frac{1}{5\cdot 2^{d-3}}},\]
whenever $|A|\le (Cp)^{\frac{1}{2}+\frac{1}{5\cdot 2^{d-1}-2}}$.
\end{proof}
Using the same argument, we also have another corollary of Lemma \ref{thm-longx}.
\bigskip
\begin{corollary}\label{hp2}
Let $A$ be a set in $\mathbb{F}_p$, and $C$ be a subset of $A$. For an integer $d\ge 2$, suppose that $|A|\ll p^{\frac{1}{2}+\frac{1}{5\cdot 2^{d-1}-2}}$ and $E^\times(C)\lesssim |A|^{14/5}$, then we have 
\[E_d(C\cdot C)\ll d^4(\log|A|)^4 |A|^{4d-2+\frac{1}{5\cdot 2^{d-3}}}.\]
\end{corollary}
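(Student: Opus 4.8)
The plan is to run the same induction on $d$ that drives Corollary \ref{hp1}, replacing the additive recursion of Lemma \ref{thm-long} by its multiplicative counterpart Lemma \ref{thm-longx} and the additive energy $E^+$ by the multiplicative energy $E^\times$. Concretely, I would prove by induction on $d$ that
\[E_d(C\cdot C)\le 4C^2 d^4(\log|A|)^4\, |A|^{4d-2+\frac{1}{5\cdot 2^{d-3}}}\]
holds whenever $|A|\le (Cp)^{\frac12+\frac{1}{5\cdot 2^{d-1}-2}}$, where $C$ is the constant furnished by Lemma \ref{thm-longx}. The statement of the corollary then follows after absorbing $C$ into the $\ll$ notation.

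For the base case $d=2$, the key observation is that $E_1(C\cdot C)$ counts quadruples $(a,b,c,e)\in C^4$ with $ab=ce$, so $E_1(C\cdot C)=E^\times(C)\lesssim |A|^{14/5}$. Feeding this into Lemma \ref{thm-longx} and using $|C|\le|A|$ bounds the second term by $|A|^{5}\cdot|A|^{7/5}=|A|^{32/5}$, which is exactly the target exponent $4d-2+\frac{1}{5\cdot 2^{d-3}}=\frac{32}{5}$ at $d=2$. The first term $|C|^{8}/p\le|A|^{8}/p$ is dominated by $|A|^{32/5}$ precisely when $|A|^{8/5}\le p$, i.e. when $|A|\ll p^{5/8}$, which is the hypothesis at $d=2$ since $\frac12+\frac{1}{5\cdot2-2}=\frac58$.

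For the inductive step I would substitute the induction hypothesis for $E_{d-1}(C\cdot C)$ into the recursion of Lemma \ref{thm-longx}; the exponent bookkeeping is formally identical to the one in the proof of Corollary \ref{hp1}, since the two recursions have the same shape. Taking square roots halves the exponent $4d-6+\frac{1}{5\cdot2^{d-4}}$ to $2d-3+\frac{1}{5\cdot2^{d-3}}$, and multiplying by $|C|^{2d+1}$ together with the prefactor $Cd^2(\log|A|)^2$ reproduces the target bound with the correct powers of $d$ and $\log|A|$. One also must check that the hypothesis on $|A|$ passes down the induction: since $5\cdot2^{d-2}-2<5\cdot2^{d-1}-2$, the level-$d$ constraint on $|A|$ is the more restrictive one, so it automatically licenses the use of the inductive hypothesis at level $d-1$.

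The only genuine point to verify, rather than a real obstacle, is that the constraint on $|A|$ forces the ``main term'' $|A|^{4d}/p$ to remain below the energy term $|A|^{4d-2+\frac{1}{5\cdot2^{d-3}}}$ at every level. This amounts to $|A|^{2-\frac{1}{5\cdot2^{d-3}}}\lesssim p$, and a short computation shows the two exponents are conjugate: writing $m=5\cdot2^{d-3}$, one checks $\left(\frac12+\frac{1}{4m-2}\right)\left(2-\frac{1}{m}\right)=1$, so the hypothesis $|A|\le(Cp)^{\frac12+\frac{1}{4m-2}}$ yields $|A|^{2-1/m}\le Cp$ exactly. This identity is what makes $4d-2+\frac{1}{5\cdot2^{d-3}}$ the self-consistent choice of exponent; once it is in hand the induction closes with room to spare, the leading constant $4$ absorbing both contributions. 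The multiplicative case thus requires no new input beyond Lemma \ref{thm-longx}, which the paper already establishes by the argument proving Lemma \ref{thm-long}.
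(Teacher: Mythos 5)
Your proposal is correct and follows essentially the same route as the paper, which proves Corollary \ref{hp2} by precisely the induction you describe: the argument of Corollary \ref{hp1} with Lemma \ref{thm-longx} in place of Lemma \ref{thm-long}, $E^{\times}(C)$ in place of $E^{+}(B)$, and the base case $E_1(C\cdot C)=E^{\times}(C)\lesssim |A|^{14/5}$. Your explicit check of the conjugate-exponent identity $\left(\frac12+\frac{1}{4m-2}\right)\left(2-\frac1m\right)=1$ with $m=5\cdot 2^{d-3}$, and the observation that the level-$d$ size constraint implies the level-$(d-1)$ one, are exactly the bookkeeping the paper leaves implicit when it says ``using the same argument.''
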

\bigskip
We are now ready to prove Theorem \ref{last}. 
\paragraph{Proof of Theorem \ref{last}:}
It follows from Theorem \ref{big} that there exist two disjoint subsets $B$ and $C$ of $A$ such that $A=B \sqcup C$ and $\max\{E^+(B), E^\times (C)\}\lesssim |A|^{14/5}$. One now can apply Corollaries \ref{hp1} and \ref{hp2} to derive 
\[\max\left\lbrace E_d((B-B)^2), E_d(C\cdot C)\right\rbrace \ll d^4(\log|A|)^4|A|^{4d-2+\frac{1}{5\cdot 2^{d-3}}}.\]
This completes the proof of the theorem. $\hfill\square$
\paragraph{Acknowledgments:} 
The authors are  grateful to the referee for useful comments and corrections. The first listed author was supported by Swiss National Science Foundation
grant P400P2-183916. The second listed author was supported by the National Foundation for Science and Technology Development Project. 101.99-2019.318.

\end{document}